\documentclass[12pt, reqno, final, a4paper]{amsart}
\usepackage[pagebackref,colorlinks,linkcolor=purple,citecolor=blue,urlcolor=blue,hypertexnames=true]{hyperref}
\usepackage[capitalize,nameinlink]{cleveref}

\usepackage{color, bm, amscd, tikz-cd, amssymb}
\makeatletter
\newcommand{\proofstep}[1]{%
  \par
  \addvspace{\medskipamount}
  \textit{#1\@addpunct{.}}\enspace\ignorespaces
}
\makeatother

\usepackage[T1]{fontenc}
\usepackage[utf8]{inputenc}
\usepackage{amsmath,amssymb,amsthm,mathtools}
\usepackage[margin=1.15in]{geometry}
\usepackage{microtype}

\theoremstyle{plain}
\newtheorem{theorem}{Theorem}[section]
\newtheorem{proposition}{Proposition}[section]
\newtheorem{lemma}[theorem]{Lemma}
\newtheorem{corollary}[theorem]{Corollary}

\theoremstyle{definition}

\theoremstyle{remark}

\DeclareMathOperator{\Ree}{Re}
\DeclareMathOperator{\Cl}{Cl}
\DeclareMathOperator{\tr}{tr}
\newcommand{\D}{\mathbb D}
\newcommand{\T}{\mathbb T}
\newcommand{\C}{\mathbb C}

\newcommand{\E}{\mathbb E}
\newcommand{\Borel}{\mathcal B}

\title[LIL for inner functions]{Law of iterated logarithm for inner functions}

\author{Poornendu Kumar}
\address{Faculty of Mathematics and Physics, University of Ljubljana, Slovenia}
\email{poornendu.kumar@fmf.uni-lj.si}
\author{Raghavendra Tripathi}
\address{Department of Mathematics, Division of Science, New York University, Abu Dhabi, UAE}
\email{rt1986@nyu.edu}

\subjclass[2020]{Primary  60F15, 30J05; Secondary 37A30, 60G42}
\keywords{Inner functions, Law of the iterated logarithm, reverse martingales, martingale difference, dynamics of inner functions}

\date{June 6, 2026}

\begin{document}

\begin{abstract}
In a recent work [\emph{Adv. Math.} {\bf 401} (2022), Paper No. 108318], a central limit theorem was established for the linear combinations of the iterates of a non-rotational inner function fixing the origin. In this paper, we prove the law of iterated logarithm (LIL) in the same setup, with a very mild condition on the coefficients. We also identify the full set of subsequential limit points at the LIL scale. Using the Aleksandrov-Clark decomposition and measure-preserving properties of the inner functions, one can construct a reverse martingale that is close to the linear combinations of inner functions. We prove the LIL for the partial sums of reverse martingale differences under a Feller-type assumption, which then transfers to the linear combinations of iterates of the inner functions. 
\end{abstract}
\thanks{The authors are sincerely grateful to Professor Artur Nicolau for his valuable comments and insightful suggestions, which greatly improved the paper.}

\maketitle

\section{Introduction}
\label{sec:introduction}
A large part of probability is concerned with limit theorems, which quantify the behaviour of a sequence of random variables at different scales. Let $(X_i)_{i\geq 1}$ be a sequence of independent and identically distributed (i.i.d.) random variables with mean $\mu$ and variance $\sigma^2$ and let $S_n=\sum_{i=1}^{n}X_i$. The law of large numbers (LLN) and the central limit theorem (CLT) quantify the behaviour of $(S_n-n\mu)$ at the scale $n$ and $\sqrt{n}$, respectively. More precisely, as $n\to \infty$, we have
\begin{align*}
    \frac{S_n-n\mu}{n}\to 0 \text{ almost surely}; \qquad 
    \frac{S_n-n\mu}{\sigma\sqrt{n}}\stackrel{d}{\Longrightarrow} \mathcal{N}(0, 1).
\end{align*}
Here, $\mathcal{N}(0, 1)$ denotes a standard Gaussian random variable and $\stackrel{d}{\Longrightarrow}$ denotes the convergence in the sense of distributions. Notice, however, at the CLT scale, the random variable $(S_n-n\mu)$ can and will take arbitrarily large values infinitely often. It is natural to ask, what happens in between the LLN and the CLT? This is answered by the law of the iterated logarithm (LIL), which states that 
\begin{align*}
\limsup_{n\to \infty }\frac{S_n-n\mu}{\sigma \sqrt{2n\log\log n}} =+1, \qquad \liminf_{n\to \infty }\frac{S_n-n\mu}{\sigma \sqrt{2n\log\log n}} =-1\;.
\end{align*}

Perhaps surprisingly, this question was first asked in the context of number theory. Borel's celebrated result that almost every number in $[0, 1]$ is normal is an instance of the law of large numbers. This inspired the study of finer limiting properties of the sum $\sum_{i=1}^{n}X_i$ where $X_i$ is the $i$th digit in the base $d$ expansion of a uniform random number $U\in [0, 1]$ by Hausdorff, Hardy and Littlewood, and Khintchine. In particular, Khintchine proved the famous law of the iterated logarithm for $S_n$. It is now well understood that the LLN, the CLT, and the LIL are not limited to the sum of i.i.d. random variables. In particular, the central limit theorem and the law of the iterated logarithm are known for several important classes of stochastic and dynamical systems, including independent random variables, martingales, Markov processes, random multiplicative functions, expanding dynamical systems, and certain holomorphic dynamical systems. The literature surrounding this is too vast to enumerate here. We refer the interested readers to~\cite{Stout74, Bingham, HH80}. Let us mention that Bingham in his excellent survey~\cite[Section 18--20]{Bingham} lists over forty different contexts of LIL in probability, statistics, analysis, and number theory. 

In analysis, one of the earliest and most prominent appearances of CLT and LIL is in the theory of lacunary trigonometric sums. It is long understood that lacunary trigonometric sums behave like the sum of i.i.d. random variables in several ways. Indeed, Kac~\cite{Kac49} and the Salem--Zygmund, Erdos--Gal, and Weiss~\cite{SalemZygmund1950, ErdosGal1955, Weiss1959} established the central limit theorem, and the laws of the iterated logarithm, respectively, for the lacunary exponential sums. We refer the reader to~\cite{AistleitnerBerkesTichy2023} for the history and motivations for studying these problems. We should also point out that the central limit theorem and the law of iterated logarithms are not valid for arbitrary trigonometric polynomials. It also naturally leads to the question: how much of this sum of i.i.d. random variables' behavior persists if $x\mapsto e^{2\pi k ix}$ is replaced by other $1$-periodic functions. Such questions have also been investigated in the recent past, and the effect of the arithmetic structure of the sequence $(n_k)_{k\geq 1}$ on the sum $S_n(x)=\sum_{k=1}^{n}f(n_k x)$ -- where $f$ is a $1$-periodic function-- has also been intensely investigated. These investigations reveal deep and intimate connections with the arithmetical properties of the sequence $(n_k)$. We refer the reader to phenomena\cite{AistleitnerGantertKabluchkoProchnoRamanan2023, AistleitnerFruehwirthHaukeManskova2025, AistleitnerBerkesTichy2023} and the references therein for more detail on this topic, as well as for more recent investigations into moderate and large deviation phenomena for lacunary trigonometric polynomials.

Lacunary trigonometric polynomials are also an important class of examples of Bloch functions that arise naturally in the study of the mappings of the disk. In a celebrated work, Makarov~\cite{Makarov} proved the LIL for Bloch functions and used it to quantify the distortion of the boundary of a Jordan domain under conformal mappings. We also refer to the series of works by Nicolau and Llorente~\cite{Nicolau2018, llorente2004regularity, llorente2026law} for more recent LIL-type results with applications to regularity properties of measures and functions
We also refer the reader to~\cite[Section 20]{Bingham} for early references in this area.

Many lacunary trigonometric polynomials arise naturally as partial sums of iterates of \emph{inner functions}. For the convenience of the reader, we recall the definition of an inner function in the next section. Introduced by Nevanlinna and developed through the pioneering work of the Riesz brothers, Frostman, and Beurling, inner functions have become a fundamental object in complex analysis, with deep connections to operator theory, harmonic analysis, and dynamical systems. We refer the reader to \cite{Garnett2007, Mashreghi} for more details.
The dynamics of iterates of inner functions have been studied extensively from the viewpoint of ergodic theory; see, for instance,~\cite{AaronsonNadkarni2023, Aaronson1978, FerreiraNicolau2024, IvriiUrbanski2023, Ivrii2026, NicolauSoler2022, Pommerenke1981}. More recently, Nicolau and Soler-i-Gibert established a central limit theorem for the linear combinations of the iterates of inner functions using a Fourier-analytic argumnt~\cite{NicolauSoler2022, NicolauSoler2026}. A natural problem is whether these sums admit a law of the iterated logarithm. The main purpose of the present paper is to establish such a theorem.

\section{Setup and main results}
\label{sec:Setup}
Let $\D$ denote the open unit disc and let $\T=\partial\D$ be the unit circle. Let $m$ denotes normalized Lebesgue measure on $\T$.  By Fatou's theorem, every bounded holomorphic function $f:\D\to\mathbb{C}$ admits radial limits
\[
\lim_{r\to1^-}f(r\zeta)
\]
for $m$-almost every $\zeta\in\T$, where $m$ denotes the normalized Lebesgue measure on $\T$. A bounded holomorphic function $f$ is called an \emph{inner function} if its radial boundary values have modulus one for $m$-almost every point of $\T$. We use $T_f:\T\to\T$ for its boundary map on the unit circle, and we also call $T_f$ an inner function.

Power maps $z\mapsto z^p$ and the finite Blaschke products are the basic examples of inner functions. For a function $g$ on the unit circle or on the unit disk, we denote by $g^n$ the $n$-fold composition of $g$, that is, we set $g^1=g$ and $g^n=g\circ g^{n-1}$ for $n\geq 1$. Let $Z$ denote the coordinate map $Z(z)=z$ on $\T$. We set $T_f^{0}=Z$. 

Let $b=(b_n)_{n\geq 0}$ be a sequence of complex numbers and $f$ be an inner function fixing the origin which is not a rotation. We are interested in studying the fluctuations of 
\[
S_N^{(b)}(z) := \sum_{n=0}^{N-1}b_n\,T_f^{n}(z)\;.
\]
In a recent work, Nicolau and Soler i Gibert~\cite{NicolauSoler2022} showed that $S_N^{(b)}$ behaves like the sum of independent, identically distributed random variables at the CLT scale. More precisely, let 
\[
\sigma_N^2(f) := \|S_{N}^{(b)}\|_{L^2(m)}^2
= \sum_{n=0}^{N-1}|b_n|^2
 +2\Ree\sum_{k=1}^{N-1}\lambda^k
       \sum_{n=0}^{N-k-1}\overline{b_n}b_{n+k},
\qquad \lambda=f'(0)\;.
\]
They showed that if 
\begin{equation}
\label{eqn:NSiG-CLT_condition}
    B_N:=\sum_{n=0}^{N-1}|b_n|^2\to \infty, \quad \text{and}\quad \frac{\max_{0\leq n<N}|b_n|^2}{B_{N}}\to 0,  
\end{equation}
as $N\to \infty$, then for any Borel measurable set $A\subseteq \mathbb{C}$, we have
\[
m\left\{z:\sqrt{\frac{2}{\sigma_N^2(f)}}S_N^{(b)}(z)\in A\right\}
\xrightarrow[N\to\infty]{}
\frac{1}{2\pi}\int_A e^{-|w|^2/2}\,dw\;.
\]
The original proof of CLT in~\cite{NicolauSoler2022} used a slightly different condition on the coefficients. The condition~\eqref{eqn:NSiG-CLT_condition} was shown in~\cite{NicolauSoler2026}, where the authors also showed that these conditions are necessary.

It is natural to ask if LIL holds for $S_N^{(b)}$. We remind the reader that for $p\geq 2$, power functions $z\mapsto z^p$ satisfy our assumptions, and for such functions the LIL follows from the work of Salem and Zygmund~\cite{SalemZygmund1950}. The LIL is generally harder to prove and requires stringent conditions (even for the sum of independent random variables). We prove LIL for $S_N^{(b)}$ under a very mild strengthening of the condition~\eqref{eqn:NSiG-CLT_condition}, analogous to Feller's condition for the LIL for the sum of independent random variables. We need the following assumption on the coefficient $(b_{n})_{n\geq 0}$. 
\begin{equation}
\label{eqn:LILcondition}
 B_{N}=\sum_{n=0}^{N-1}|b_n|^2\to \infty, \quad \text{and }\quad
 \frac{\max_{0\le n<N}|b_n|^2 L(L(B_N))}{B_N}\to 0\;,
\end{equation}
as $N\to \infty$. Here and throughout the paper, to simplify our notation and avoid some pathological issues, we set $L(x):=\log(\max\{e, x\})$. Note that $L(L(x))=\log\log x$ for $x\geq e^e$. Under assumption~\eqref{eqn:LILcondition}, we not only prove LIL but also identify the full set of subsequential limit points of $S_N^{(b)}$ at the LIL scale. Before we state our main result, we make one more definition. For a sequence $(x_n)_{n\in \mathbb{N}}\subseteq \C$, set $\Cl\{x_n:n\ge 1\}$ to denote the set of subsequential limits of $(x_n)_{n\in \mathbb{N}}$. 

\begin{theorem}[LIL for iterates of inner functions]
\label{thm:GeneralLIL}
Let $f$ be an inner function on $\mathbb D$ with $f(0)=0$ that is not a rotation. Let $(b_n)_{n\geq 0}$ be a sequence of complex numbers satisfying~\eqref{eqn:LILcondition}. Then, for $m$-almost every $z\in \T$, we have 
\[
\Cl\left\{
 \frac{S_N^{(b)}(z)}{\sqrt{\sigma_N^2(f)L(L(\sigma_N^2(f)))}} : N\ge 1 \right\} =\overline{\mathbb{D}}.
\]
\end{theorem}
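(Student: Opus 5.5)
We approximate $S_N^{(b)}$ by a sum of reverse martingale differences, prove a complex LIL with full cluster set for such sums under a Feller-type condition, and show that the approximation error is negligible at the LIL scale. The steps below follow the abstract.

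\emph{Step 1: martingale approximation.} Write $\lambda=f'(0)$, so $|\lambda|<1$ since $f$ is not a rotation. Lebesgue measure $m$ is invariant under $T_f$. Let $\mathcal{F}_n=T_f^{-n}(\mathcal{B})$, where $\mathcal B$ is the Borel $\sigma$-algebra; this is a decreasing filtration. The Aleksandrov--Clark disintegration gives, for $g\in L^1(m)$ analytic (i.e. in $H^1$),
\[
\E[g\circ T_f^{k}\mid \mathcal{F}_{k+1}]=(Pg)\circ T_f^{k+1},
\]
where $P$ is the Aleksandrov (transfer) operator. Since $P Z=\lambda Z$, we get
\[
\E[T_f^{k}\mid \mathcal{F}_{k+j}]=\lambda^{j}T_f^{k+j}.
\]
Set
\[
\xi_n:=T_f^{n}-\lambda T_f^{n+1}, \qquad c_n:=\sum_{k\le n} b_k\lambda^{n-k}.
\]
Then $T_f^k=\sum_{j\ge0}\lambda^j\xi_{k+j}$ in $L^2$, and $\xi_n$ is $\mathcal F_n$-measurable with $\E[\xi_n\mid\mathcal F_{n+1}]=0$. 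Rearranging,
\[
S_N^{(b)}=\sum_{n=0}^{N-1} c_n\xi_n+R_N,
\]
where $R_N=\sum_{n\ge N}(\cdot)\xi_n$ is a tail term. Its coefficients are controlled by $\sum_{k<N}|b_k||\lambda|^{n-k}$, so $\|R_N\|_{L^2}\lesssim \max_{N-M\le k<N}|b_k|+|\lambda|^M\sqrt{B_N}$ for any $M$.

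\emph{Step 2: conditional variances.} Compute
\[
\E[|\xi_n|^2\mid\mathcal F_{n+1}]=1-|\lambda|^2, \qquad \E[\xi_n^2\mid\mathcal F_{n+1}]=(P(Z^2)-\lambda^2Z^2)\circ T_f^{n+1}.
\]
The first identity uses $P|Z|^2=1$. The second is a function of $T_f^{n+1}$ with mean zero, and it is a multiple of $Z^2$ only up to an analytic correction. Hence:
\begin{itemize}
\item the Hermitian quadratic variation is deterministic, $V_N=(1-|\lambda|^2)\sum_{n<N}|c_n|^2\sim\sigma_N^2(f)$, which follows from the formula for $\sigma_N^2$;
\item the pseudo-variance $\sum c_n^2\,h\circ T_f^{n+1}$, with $h$ analytic and $h(0)=0$, has $L^2$ norm $O(\sqrt{\sum|c_n|^4})=o(V_N)$, since $|\E[h\circ T^j\,\overline{h\circ T^k}]|\lesssim|\lambda|^{|j-k|}$.
\end{itemize}
A Borel--Cantelli argument along the geometric subsequence $V_{N_k}\approx \theta^k$ then upgrades this to $\sum c_n^2\,h\circ T^{n+1}=o(V_N)$ almost surely, and similarly for $R_N$ via Chebyshev and monotone interpolation. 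The variance-ratio condition in \eqref{eqn:LILcondition}, transferred to the $c_n$ (we have $\max|c_n|^2\lesssim \max|b_k|^2$ and $V_N\asymp B_N$), is exactly the Feller/Stout condition $|c_n\xi_n|\le \epsilon_n\sqrt{V_n/LL(V_n)}$ with $\epsilon_n\to0$, because $|\xi_n|\le 2$.

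\emph{Step 3: LIL with cluster set for reverse martingales.} Fix a direction $\theta$ and apply Stout's martingale LIL to $\Ree(e^{-i\theta}\sum c_n\xi_n)$. Because $\mathcal F_n$ decreases, we do this by reversing time on blocks $[0,N)$, or by proving the exponential upper and lower bounds directly via the standard exponential supermartingale $\exp(t M-\tfrac{t^2}{2}(1+\epsilon)\langle M\rangle)$ built backwards. The real-part variance is $\tfrac12 V_N(1+o(1))$ by Step 2. This gives
\[
\limsup \Ree(e^{-i\theta}S_N)/\sqrt{V_N\,LL V_N}=1 .
\]
For all $\theta$ in a countable dense set simultaneously, this yields an outer bound: the cluster set is contained in $\overline{\D}$.

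For the inner bound, show that every $w\in\overline{\D}$ is a cluster point. Use the standard Kolmogorov--Strassen block argument along $N_k$ with $V_{N_k}\approx k^k$ or $e^{k^{1+\delta}}$. The increments over blocks are nearly independent given the approximate conditional independence, and the lower-tail exponential bound $m(\Ree(e^{-i\theta}\Delta_k)\ge(1-\epsilon)r\cdots)\ge (\log V)^{-r^2(1+\epsilon)}$ allows the second Borel--Cantelli lemma for martingales (Lévy's version) to hit a small disc around $w=re^{i\theta}$, using the vanishing pseudo-variance to control the orthogonal component.

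The main obstacle is this lower bound and cluster-set step. We need a two-dimensional exponential lower bound, i.e. a moderate-deviation estimate for complex martingales in a prescribed small disc, and we must make conditional Borel--Cantelli work with a reverse filtration. To handle this, we would first try a Skorokhod/Strassen-type strong invariance principle: embed $(\Ree,\mathrm{Im})$ of the martingale into a planar Brownian motion with error $o(\sqrt{V_N LL V_N})$, using the Feller condition and the almost-sure negligibility of the pseudo-variance. The full cluster set $\overline{\D}$ then follows from Strassen's functional LIL for planar Brownian motion.
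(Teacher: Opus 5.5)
Your architecture matches the paper's: the decomposition $S_N^{(b)}=\sum_{n<N}\beta_nD_n+\overline\lambda\beta_{N-1}X_N$, a deterministic Hermitian quadratic variation, a negligible pseudo-variance, an outer bound from one-dimensional projections, and an inner bound from sparse blocks with a conditional Borel--Cantelli lemma. But the harder half of the theorem, that every $w$ with $|w|<1$ is a cluster point, is not proved, and the tool you propose for it does not apply.

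A Skorokhod/Strassen embedding needs a martingale in $N$, and $M_N=\sum_{n<N}Y_n$ is not one. Its increment $Y_N$ is centered given the future $\sigma$-field $\mathcal F_{N+1}$, not given the past. It is not a reverse martingale either: the tail sums $\sum_{n\ge N}Y_n$ that would form one diverge, since $\sum_n|\beta_n|^2=\infty$. Reversing time turns $[0,N)$ into a martingale only for one fixed $N$, and the reversed process changes with $N$. So there is no single process to embed into planar Brownian motion. For the same reason Stout's LIL cannot simply be quoted for the outer bound. Only your second option there works (exponential bounds on each reversed block plus Borel--Cantelli along $N_k$), and that is what Proposition~\ref{lem:real-reverse-lil-upper} does.

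Your fallback sketch for the inner bound has the same hole in another form. Put $\Lambda_k=L(L(v_{N_k}^2))$ and $a_k=v_{N_k}\sqrt{\Lambda_k}$. L\'evy's conditional Borel--Cantelli lemma requires the \emph{conditional} probabilities that the $k$-th block lands within $\eta a_k$ of $wa_k$, given the coarser $\sigma$-field $\mathcal F_{N_k}$, to have divergent sum. In practice this means a lower bound uniform in the sample point. An unconditional moderate-deviation bound does not give this, because the block's conditional covariance given $\mathcal F_{N_k}$ is random. The bound must also be genuinely two-dimensional. Controlling the orthogonal component by subtraction fails: it exceeds $\eta a_k$ with probability about $e^{-\eta^2\Lambda_k}$, which dwarfs the target probability $e^{-|w|^2\Lambda_k}$ when $\eta<|w|$.

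The missing idea is Lemma~\ref{lem:completed-block}:
\begin{enumerate}
\item reverse each block;
\item truncate it predictably so its accumulated conditional covariance never exceeds $T_k=(\tfrac12+\delta_k)v_{N_k}^2I_2$;
\item complete it with independent auxiliary increments so the total predictable covariance equals $T_k$ exactly.
\end{enumerate}
An exponential tilt with parameter $T_k^{-1}wa_k$, followed by Chebyshev under the tilted law, then gives $\widetilde{\mathbb P}(|\widetilde B_k-wa_k|\le\eta a_k\mid\mathcal F_{N_k})\ge\exp\{-(|w|^2+2\eta|w|+o(1))\Lambda_k\}$. This feeds the Borel--Cantelli lemma for decreasing filtrations (Lemma~\ref{lem:reverse-conditional-BC}), and Freedman's inequality shows the completion terms $E_k$ are negligible.

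A second gap is in Step 2. The almost-sure negligibility of the pseudo-variance does not follow from your $L^2$ bound by Chebyshev along $V_{N_k}\approx\theta^k$. Chebyshev bounds the relevant probabilities by a constant times $\max_n|c_n|^2/(\varepsilon^2V_{N_k})$. The Feller condition makes this only $o(1/L(L(V_{N_k})))=o(1/\log k)$, which is not summable in general, and no fixed higher moment helps either. Since the sum is not monotone in $N$, interpolating between $N_k$ and $N_{k+1}$ also needs a maximal inequality. The paper's Lemma~\ref{lem:SLLNWeighted} instead writes $\sum a_nX_{n+1}$ itself as a reverse martingale plus a bounded term and applies Freedman's maximal inequality (Lemma~\ref{lem:reverse-freedman}). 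This gives tails $\exp(-c\,W_{N_k}/a^*_{N_k})$, which are summable precisely because $W_{N_k}/(a^*_{N_k}\log k)\to\infty$.

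Finally, there is a conjugation slip. $\E(Z\mid T_f^{-1}\Borel)=\overline\lambda T_f$ (Lemma~\ref{lem:regression}), so the differences must be $T_f^n-\overline\lambda T_f^{n+1}$; with $\lambda$ they are not centered unless $\lambda$ is real. After this correction:
\begin{itemize}
\item $c_n=\beta_n$, and your remainder is exactly $R_N=\overline\lambda\beta_{N-1}T_f^N$, bounded deterministically by $C\max_{k<N}|b_k|$, so it needs no probabilistic argument;
\item $h=\overline{a_2}Z$ exactly, which is what justifies your correlation bound. For a general analytic $h$ with $h(0)=0$, correlations need not decay exponentially.
\end{itemize}
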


The proof of Theorem~\ref{thm:GeneralLIL} can be roughly split into steps. The first step is to recognize that $S_N^{(b)}$ is close to the partial sums of a reverse martingale difference sequence (see Section~\ref{MRM} for definitions) $M_N$. Consequently, one can transfer the LIL for $M_N$ to an LIL for $S_N$. This step crucially relies on the Aleksandrov--Clark decomposition and the measure-preserving properties of inner functions (see \cref{subsection_AC}). The observation that $S_N^{(b)}$ behaves like the partial sums of a reverse martingale difference was already made by Chen, Fu, Lin, and Qiu~\cite{ChenFuLinQiu2026} to deduce a quantitative CLT for $S_N^{(b)}$ from the CLT for reverse martingale. It is convenient to rewrite the assumption~\eqref{eqn:LILcondition} and Theorem~\ref{thm:GeneralLIL-martingale} in a different but equivalent form, which we do in Theorem~\ref{thm:GeneralLIL-martingale}. This formulation is better suited to the reverse-martingale approach.

The second step is to establish a law of the iterated logarithm for the partial sums of reverse martingale differences. Our second main result establishes the LIL for $\mathbb{R}^2$-valued reverse martingale and identifies the full cluster set under appropriate assumptions. This is of independent interest in its own. While the LIL for reverse martingales and the partial sums of the reverse martingale differences is a rich and active area, there are several results available in this direction~\cite{ScottHuggins1983, Cuny2015, CunyMerlevede2015}. We find that none of the existing results give the full LIL for the reverse martingales under our assumptions. 

\begin{theorem}[LIL for the partial sums of reverse martingale differences]
\label{thm:LILforReverseMartingale}
Let $(\mathcal F_n)_{n\ge0}$ be a decreasing filtration on a probability space
$(\Omega,\mathcal F,\mathbb P)$, and let $(Y_n,\mathcal F_n)_{n\ge0}$ be
$\mathbb R^2$-valued reverse martingale differences, that is,
\[
        Y_n\text{ is }\mathcal F_n\text{-measurable},\qquad
        \E(Y_n\mid\mathcal F_{n+1})=0 .
\]
Assume that $|Y_n|\le c_n$ a.s. for deterministic constants $c_n\ge0$, and put
\[
        M_N=\sum_{n=0}^{N-1}Y_n,
        \qquad
        Q_N=\sum_{n=0}^{N-1}\E(Y_nY_n^T\mid\mathcal F_{n+1}),
        \qquad
        c_N^*=\max_{0\le n<N}c_n .
\]
Let $v_N>0$ be deterministic and nondecreasing, with $v_N\to\infty$, and assume
\begin{equation}\label{eq:revLIL-cov-matrix}
        \frac{Q_N}{v_N^2}\longrightarrow \frac12 I_2\quad\text{a.s.},
        \qquad
        \frac{c_N^*\sqrt{L(L(v_N^2))}}{v_N}\longrightarrow0 .
\end{equation}
Then, almost surely,
\[
        \Cl\left\{
        \frac{M_N}{v_N\sqrt{L(L(v_N^2))}}:N\ge1
        \right\}=\bigl\{x\in\mathbb R^2: |x|\le1\bigr\}.
\]
\end{theorem}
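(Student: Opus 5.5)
The plan is to reduce to the scalar case by the Cramér--Wold device and then prove the scalar LIL for reverse martingales by the classical route: an exponential supermartingale bound for the upper half, and a conditional Borel--Cantelli argument along a geometric subsequence for the lower half. For each unit vector $u\in\mathbb R^2$ set $Y_n^u=\langle u,Y_n\rangle$. These are real reverse martingale differences with $|Y_n^u|\le c_n$, and their conditional variance sum is $\langle u,Q_Nu\rangle\sim v_N^2/2$ almost surely. The natural normalization is $\phi_N=v_N\sqrt{L(L(v_N^2))}$. It suffices to prove two things for each $u$ in a countable dense set of directions:
\begin{itemize}
\item[(a)] $\limsup_N \langle u,M_N\rangle/\phi_N\le 1$ almost surely;
\item[(b)] for each $x$ with $|x|<1$, we have $\liminf_N |M_N/\phi_N-x|=0$ almost surely.
\end{itemize}
Applying (a) to countably many $u$ confines the cluster set to $\overline{\{|x|\le1\}}$. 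Applying (b) to a countable dense set of $x$, and using that the cluster set is closed, gives the whole disc.

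For (a), the reverse structure is handled by time reversal. For fixed $N$, the sequence $Y_{N-1},Y_{N-2},\dots,Y_0$ is an ordinary martingale difference sequence with respect to the increasing filtration $\mathcal G_k=\mathcal F_{N-k}$. Hence, for $\lambda c_N^*\le1$,
\[
\exp\Bigl(\lambda\sum_{n=N-k}^{N-1}Y_n^u-\tfrac{\lambda^2}{2}(1+\lambda c_N^*)\sum_{n=N-k}^{N-1}\E((Y_n^u)^2\mid\mathcal F_{n+1})\Bigr)
\]
is a supermartingale in $k$. Take $N_k=\min\{N:v_N^2\ge\theta^k\}$ with $\theta>1$. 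Evaluate at the full sum $k=N_k$, and restrict to the event where $\langle u,Q_{N_k}u\rangle\le(1+\epsilon)v_{N_k}^2/2$. Choose $\lambda=(1+\epsilon)\sqrt{2L(L(v^2))}/v$, which gives $\lambda c^*\to0$ by \eqref{eq:revLIL-cov-matrix}. This yields $\mathbb P(\langle u,M_{N_k}\rangle>(1+\epsilon)\phi_{N_k},\ \text{good event})\lesssim k^{-(1+\epsilon)}$, which is summable. Since the good event eventually holds almost surely, Borel--Cantelli controls the subsequence.

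Interpolating between $N_k$ and $N_{k+1}$ is more delicate, because Doob's maximal inequality in reversed time controls tails $\sum_{n=j}^{N-1}$, not prefixes $M_j$. The fix is to write $M_j=M_{N_{k+1}}-\sum_{n=j}^{N_{k+1}-1}Y_n$. The first term is handled on the subsequence. The tail sums are handled by the maximal form of the exponential supermartingale, which is valid for the reversed martingale. Their conditional variance is at most $(\theta-1+o(1))v_{N_k}^2/2$, so they are negligible compared with $\phi_{N_k}$ once $\theta\downarrow1$. A minor extra issue is that $v_N$ may jump, so $N_k$ may overshoot. The boundedness $c_N^*=o(v_N)$ and $Q_N/v_N^2\to I/2$ force $v_{N+1}/v_N\to1$, so this causes no trouble.

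For (b), which I expect to be the main obstacle, take $\theta$ large and $N_k$ as above. Consider the blocks $D_k=M_{N_{k+1}}-M_{N_k}=\sum_{n=N_k}^{N_{k+1}-1}Y_n$. In reversed order the blocks are adapted to the decreasing filtration, since $D_k$ is $\mathcal F_{N_k}$-measurable. The goal is a lower bound
\[
\mathbb P\bigl(|D_k-x\phi_{N_{k+1}}|<\epsilon\phi_{N_{k+1}}\bigm|\mathcal F_{N_{k+1}}\bigr)\ge k^{-(|x|^2+\delta)}
\]
on good events, obtained via a conditional exponential change of measure (a Kolmogorov-type lower bound) applied to the vector-valued block. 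The direction $x/|x|$ is tilted, and the orthogonal component is controlled by (a). Since $|x|<1$, the series $\sum_k k^{-(|x|^2+\delta)}$ diverges. Lévy's conditional Borel--Cantelli lemma, applied along the reverse filtration indexed by $k\to$ decreasing $\sigma$-fields, then gives infinitely many $k$ with $D_k\approx x\phi_{N_{k+1}}$. To apply it with a decreasing filtration, I would run it on finitely many blocks in reversed order, letting $K\to\infty$, or use the tail-$\sigma$-field version. Finally, (a) shows $|M_{N_k}|\le(1+\epsilon)\phi_{N_k}=O(\theta^{-1/2})\phi_{N_{k+1}}$, which is negligible for large $\theta$. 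The delicate points are the two-sided conditional variance control over the block, which comes from the almost sure convergence of $Q_N$ via Egorov-type truncation, and the lower exponential bound; I would prove the latter through a conditional CLT-type estimate at moderate-deviation scale, using $c^*\sqrt{LL}/v\to0$.
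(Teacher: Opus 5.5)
Your part (a) is the paper's argument: reversal within a block, Freedman's maximal inequality along $v_{N_k}^2\approx\theta^k$, and interpolation through the backward tails $M_{N_{k+1}}-M_j$. The skeleton of (b) is also the paper's: a tilted lower bound for blocks, a conditional Borel--Cantelli lemma along the decreasing $\sigma$-fields $\mathcal F_{N_{k+1}}$, and discarding $M_{N_k}$ via (a). The paper takes $v_{N_k}^2\approx e^{k^\alpha}$ with $1<\alpha<|x|^{-2}$ instead of geometric blocks with $\theta$ large, but either choice works. There is one small slip. The conditional variance is $v^2/2$, so your tilt in (a) should be $\lambda\approx 2\sqrt{L(L(v^2))}/v$. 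With your $\lambda=(1+\epsilon)\sqrt{2L(L(v^2))}/v$ the exponent is only about $(\sqrt2-\tfrac12)L(L(v^2))$, which is not summable.

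The genuine gap is the conditional lower bound in (b), and this is exactly the step that needs a new idea. The block covariance $Q_{N_{k+1}}-Q_{N_k}$ is random and only $\mathcal F_{N_k+1}$-measurable, not $\mathcal F_{N_{k+1}}$-measurable. So a change of measure cannot give a bound ``on good events''. Write $\mathbb P(A_k\mid\mathcal F_{N_{k+1}})=\E^{\vartheta}\bigl(e^{-\langle\vartheta,D_k\rangle+K_k(\vartheta)}\mathbf 1_{A_k}\mid\mathcal F_{N_{k+1}}\bigr)$ for your block sum $D_k$ and a tilt vector $\vartheta$. Both the cumulant $K_k(\vartheta)\approx\frac12\vartheta^T(Q_{N_{k+1}}-Q_{N_k})\vartheta$ and the tilted drift $(Q_{N_{k+1}}-Q_{N_k})\vartheta$ are path dependent, so you need the covariance to be good with \emph{tilted} probability close to one. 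The almost sure convergence $Q_N/v_N^2\to\frac12I_2$ gives no rate. Meanwhile the tilted density is of order $e^{O(\Lambda_k)}=k^{O(1)}$ on the relevant paths, where $\Lambda_k\sim\log k$ is the iterated-log factor of the block. Hence smallness of the bad-covariance event under $\mathbb P$ does not transfer to the tilted law. An Egorov set is neither $\mathcal F_{N_{k+1}}$-measurable nor quantitative, so it does not repair this.

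The missing device is Lemma~\ref{lem:completed-block}. In reversed time, truncate predictably: $h_j=\mathbf 1\{C_{j-1}+A_j\preceq T_k\}$, where $A_j$ is the conditional covariance of the $j$-th reversed increment, $C_j=C_{j-1}+h_jA_j$, and $T_k=(\frac12+\delta_k)V_k^2I_2$ with $V_k$ the block scale. Then append independent bounded centered vectors with total covariance $T_k-C_m$. The completed block has predictable covariance exactly $T_k$, so its cumulant is deterministic up to $o(\Lambda_k)$ and the lower bound holds at every sample point. Eventually almost surely the normalized block covariance is within $\delta_k$ of $\frac12I_2$. On that event the truncation is inactive and the added covariance is $\preceq 2\delta_kV_k^2I_2$, so the added noise is negligible by Freedman and Borel--Cantelli on the product space.

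Relatedly, the orthogonal component cannot be ``controlled by (a)''. An unconditional Freedman bound gives only $\mathbb P(|\langle x^\perp,D_k\rangle|>\epsilon\phi_{N_{k+1}})\lesssim k^{-c\epsilon^2}$. This is far larger than the target $k^{-|x|^2-\delta}$, so it cannot be subtracted. It must instead be controlled under the tilted law, by tilting with the full vector $\vartheta=T_k^{-1}x\phi_{N_{k+1}}$ and applying Chebyshev there, and that again requires the deterministic covariance.
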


We now record an important corollary of Theorem~\ref{thm:GeneralLIL}. When $b_i=1$ for all $i\geq 0$. The conditions in ~\eqref{eqn:LILcondition} are readily verified. It is also easy to check in this case that $LL(B_N)\sim LL(N)$ and $\sigma_N^2(f)/B_N\to \sigma_f^2$ where
\[
  \sigma_f^2=\Ree\frac{1+\lambda}{1-\lambda}
    =\frac{1-|\lambda|^2}{|1-\lambda|^2}, \qquad \lambda = f'(0)\;.
\]
This yields the following corollary.

\begin{corollary}
\label{cor:main}
Let $f$ be an inner function on the disc such that $f(0)=0$ and $f$ is not a rotation. Let $T_f$ be its
boundary map. Put
\[
    S_N(z)=\sum_{n=0}^{N-1}T_f^n(z).
    .
\]
Then, for $m$-almost every $z\in\T$,  and for every $u\in\T$, we have

\begin{enumerate}
    \item[(a)] \[
        \Cl\left\{
        \frac{S_N(z)}{\sqrt{N\log\log N}}:N\ge 3
        \right\}
        =\{w\in\C: |w|\le \sigma_f\};
\]
\item[(b)] \[
         \limsup_{N\to\infty}
         \frac{|S_N(z)|}{\sqrt{N\log\log N}}
         =\sigma_f
         \qquad\text{for }m\text{-a.e. }z\in\T;
 \]
 \item[(c)]
\[
        \limsup_{N\to\infty}
        \frac{\Ree(\overline u S_N(z))}{\sqrt{2N\log\log N}}
        =\frac{\sigma_f}{\sqrt 2},
        \qquad
        \liminf_{N\to\infty}
        \frac{\Ree(\overline u S_N(z))}{\sqrt{2N\log\log N}}
        =-\frac{\sigma_f}{\sqrt 2}.
\]

\end{enumerate}
\end{corollary}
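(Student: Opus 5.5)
The corollary follows from Theorem~\ref{thm:GeneralLIL} with $b_n\equiv1$, plus an elementary rescaling and some planar geometry. The plan has four steps.

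\emph{Step 1: hypotheses and variance asymptotics.} With $b_n\equiv 1$ we have $B_N=N\to\infty$ and $\max|b_n|^2 L(L(B_N))/B_N=L(L(N))/N\to0$, so \eqref{eqn:LILcondition} holds. Next we compute
\[
\sigma_N^2(f)=N+2\Ree\sum_{k=1}^{N-1}\lambda^k(N-k).
\]
Since $f$ is not a rotation and $f(0)=0$, Schwarz's lemma gives $|\lambda|<1$. Dividing by $N$ and using dominated convergence (the terms $|\lambda|^k$ are summable), we get
\[
\sigma_N^2(f)/N\to 1+2\Ree\frac{\lambda}{1-\lambda}=\Ree\frac{1+\lambda}{1-\lambda}=\sigma_f^2.
\]
Moreover $\sigma_f^2>0$, because $|\lambda|<1$. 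Consequently $L(L(\sigma_N^2(f)))=\log\log N+o(1)$, since $\log(\sigma_f^2N)=\log N+O(1)$. Hence
\[
\sqrt{\sigma_N^2(f)\,L(L(\sigma_N^2(f)))}\sim\sigma_f\sqrt{N\log\log N}.
\]

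\emph{Step 2: part (a).} Write the normalized sums from Theorem~\ref{thm:GeneralLIL} and from (a) as $x_N$ and $y_N$. Step 1 gives $y_N=\sigma_f x_N(1+o(1))$. The sequence $(x_N)$ is eventually bounded, since its cluster set is the compact set $\overline{\D}$; an unbounded subsequence would give an extra limit point in the Riemann sphere. Actually, this boundedness needs a sentence of justification. The cluster set in Theorem~\ref{thm:GeneralLIL} is stated as the set of finite subsequential limits, so I would either invoke the boundedness of $\limsup|x_N|$ that comes out of its proof, or argue as follows. Apply the theorem with the scaled sequence $b_n' = $ the same $b_n$, and note that $\limsup|x_N|\le1$ is part of the standard LIL conclusion obtained in the proof. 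Granting this, we have $y_N-\sigma_fx_N\to0$, so the two sequences have the same limit points up to the scaling by $\sigma_f$. This yields $\Cl\{y_N\}=\sigma_f\overline{\D}$, which is (a).

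\emph{Step 3: parts (b) and (c).} For (b), the map $w\mapsto|w|$ is continuous and $(y_N)$ is bounded. Therefore $\limsup|y_N|=\max\{|w|: w\in\Cl\{y_N\}\}=\sigma_f$. For (c), fix a single full-measure set on which (a) holds; it is independent of $u$. For every $u\in\T$, the functional $w\mapsto\Ree(\bar u w)$ is continuous. Its $\limsup$ along $(y_N)$ is therefore the maximum over the disc $\sigma_f\overline{\D}$, which equals $\sigma_f$. Dividing by $\sqrt2$ gives $\sigma_f/\sqrt2$. The $\liminf$ is symmetric and equals $-\sigma_f/\sqrt2$.

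\emph{Main obstacle.} The only delicate point is the boundedness issue in Step 2. It must be ensured that $\limsup|x_N|\le1$ almost surely, i.e.\ the upper-half LIL bound, and not merely that the finite cluster points lie in $\overline{\D}$. I expect this to come directly from the proof of Theorem~\ref{thm:LILforReverseMartingale}, transferred to $S_N$ via the martingale approximation. The remaining steps are routine.
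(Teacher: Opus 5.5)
Your proposal is correct and is essentially the paper's argument: specialize Theorem~\ref{thm:GeneralLIL} to $b_n\equiv1$, check \eqref{eqn:LILcondition}, and use $\sigma_N^2(f)/N\to\sigma_f^2>0$ together with $L(L(\sigma_N^2(f)))=\log\log N+o(1)$. You have placed the boundedness issue slightly wrongly, and the unfinished sentence about $b_n'$ in Step 2 should simply be deleted. Part (a) needs no boundedness, because $y_N=c_Nx_N$ with $c_N\to\sigma_f>0$ already carries finite cluster points bijectively onto finite cluster points. Parts (b) and (c), however, genuinely need $\limsup|x_N|\le1$. As you suggest, this follows from \eqref{eq:upper-cluster-contained}, transferred to $S_N^{(b)}$ via $|S_N^{(b)}-M_N|=o(v_N\sqrt{L(L(v_N^2))})$; the paper leaves this point implicit.
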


Let us mention the deep and remarkable work of Ivrri and Urba\'nski in a complementary direction. Ivrii and Urba\'nski~\cite{IvriiUrbanski2023} used the powerful thermodynamic formalism to analyze the dynamics of inner functions. They consider the partial sums of $h\circ f^n$ where $f$ is an inner function and $h$ is a suitable observable (a H\"older or a Sobolev function). Ivrii and Urba\'nski mention that their results, combined with the results of Gordin~\cite{Gor69} and Gou\"ezel~\cite{Gou10}, respectively, give the central limit theorem and the law of iterated logarithm for such partial sums. Taking $h(z)=z$, one obtains Corollary~\ref{cor:main}. 

Let us also mention that for the constant coefficient case, the associated reverse martingale differences that we construct (see~\eqref{eqn:SNvsMN}) are stationary. To get LIL for the partial sums of the stationary reverse martingale difference sequences, one can directly invoke~\cite{CunyMerlevede2015} instead of Theorem~\ref{thm:LILforReverseMartingale}, thus providing a short and quick proof in this case. 

We close this section with the discussion of another special case of Theorem~\ref{thm:GeneralLIL}. In the following discussion, it will be convenient to identify the interval $[0, 1)$ with the unit circle $\T$ via the map $\theta\mapsto z=e^{2\pi i\theta}$.  Let $f_p(z)=z^p$. When $p\geq 2$, the linear combinations of the iterates of $f_p$ 
\[L_N^{(b), p}(z):=\sum_{n=0}^{N}b_nf^{n}_p(z) = \sum_{n=0}^{N-1}b_n\exp(2\pi i p^n \theta)\]
are lacunary trigonometric sums for which the LIL was studied by Salem--Zygmund~\cite{SalemZygmund1950}, Erd\H{o}s-Gal~\cite{ErdosGal1955}, and Mary Weiss~\cite{Weiss1959}. We should point out that our conditions~\eqref{eqn:LILcondition} are precisely the assumption in ~\cite[Equation (1.2)]{Weiss1959}. Let us note, however, that Weiss~\cite{Weiss1959} proved the LIL for a more general class of lacunary trigonometric polynomials.


\subsection{Outline of the paper}
The paper is organized as follows. In \cref{sec:preliminaries}, we collect the preliminary results and definitions needed for the main proofs. 

In \cref{sec:ReverseMartingaleLIL}, we prove Theorem~\ref{thm:LILforReverseMartingale}. The proof is technical and long and relies on several Lemmas that are also proved in the same section. 

Section \ref{Sec:LILForInnerFunction} deals with the proof of Theorem~\ref{thm:GeneralLIL}. In Theorem~\ref{thm:GeneralLIL-martingale}, we provide an equivalent formulation, proof of which follows from Theorem~\ref{thm:LILforReverseMartingale} and a strong law of large numbers (SLLN) for the weighted sums of reverse martingale differences (Lemma~\ref{lem:SLLNWeighted}).

\section{Some preliminaries}\label{sec:preliminaries}

\subsection{Aleksandrov--Clark measures and decomposition}\label{subsection_AC}
Associated with every analytic self-map $f$ of the unit disc is a family of positive Borel measures on the unit circle, known as the Aleksandrov--Clark measures of $F$. They arise from the Herglotz representations of the functions $$
z\rightarrow\frac{\alpha+F(z)}{\alpha-F(z)},$$ 
for $\alpha\in\mathbb T$. Introduced by Clark \cite{Clark} and subsequently studied in depth by Aleksandrov \cite{AL1, AL2, AL3}, these measures have become an indispensable tool in function theory, complex analysis, and operator theory. We refer the reader to the survey article \cite{poltoratski2006aleksandrov} and to \cite[Chapter IX]{cima2006cauchy} for comprehensive accounts of their properties and applications. Since our focus is on inner functions, we restrict our attention to this setting and recall the corresponding description of Aleksandrov--Clark measures. We also collect several facts concerning the dynamics of inner functions that will be used throughout the paper.

Let $F$ be an inner function with $F(0)=0$.  For $\alpha\in\T$, let $\mu_\alpha$
be the Aleksandrov--Clark measure of $F$, defined as
\[
        \frac{\alpha+F(w)}{\alpha-F(w)}
        =\int_{\T}\frac{\zeta+w}{\zeta-w}\,d\mu_\alpha(\zeta),
        \qquad w\in\D. 
\]
Here, the possible imaginary constant in the Herglotz formula is zero because $F(0)=0$. Setting $w=0$ and using that $F(0)=0$, we immediately see that $\mu_{\alpha}$ is a probability measure, that is, $\mu_\alpha(\T)=1$. Moreover,
$\mu_\alpha$ is carried by $F^{-1}(\{\alpha\})$ for a.e. $\alpha\in \T$. A remarkable result of Aleksandrov shows that $(\mu_{\alpha})_{\alpha\in \T}$ is a disintegration of $m$ relative to $m$. More precisely, Aleksandrov's disintegration theorem says that
\begin{align}\label{eqn_theorem:AD}
\int_{\T} G(\zeta)\,dm(\zeta)
=
\int_{\T}\int_{\T} G(\zeta)\,d\mu_\alpha(\zeta)\,dm(\alpha).
\end{align}
for every $G\in L^1(\T,m)$. Let us point out that it is part of the remarkable result of Aleksandrov that $$\alpha\mapsto \int_{\T}G(\zeta)d\mu_{\alpha}(\zeta)$$ exists for a.e. $\alpha\in \T$ and is in $L^1(\T,m)$.

While we do not have the space to describe several interesting applications of Aleksandrov's disintegration, we record some consequences that will be used throughout the paper. The first consequence of Aleksandrov's disintegration theorem is that any inner function fixing the origin preserves the normalized Lebesgue measure $m$ on $\T$. That is, for any Borel set $A\subseteq \T$, we have $m(T^{-1}A)=m(A)$. 
Recall that a measurable map \(g:\mathbb T\to\mathbb T\) preserving
the normalized Lebesgue measure \(m\) is called \emph{ergodic with respect to $m$} if
whenever a measurable set \(A\subseteq \mathbb T\) satisfies
$g^{-1}(A)=A,$ then
\[
m(A)=0
\qquad \text{or} \qquad
m(A)=1.
\]
It is also an easy consequence of the disintegration theorem that an inner function fixing the origin, which is not a rotation, is ergodic with respect to $m$. The proof is standard, see, for example,~\cite{AaronsonBook, Pommerenke1981}. 

We now record some moment identities that are useful later. Writing
\[
        \frac{\alpha+F(w)}{\alpha-F(w)}
        =1+2\sum_{k\ge 1}\overline\alpha^{\,k} F(w)^k, \qquad 
        \frac{\zeta+w}{\zeta-w}
        =1+2\sum_{k\ge 1}\overline\zeta^{\,k} w^k, 
\]
in the disintegration formula~\eqref{eqn_theorem:AD} and
comparing the coefficients of $w^{k}$, we get moment identities. For an inner function $F$ fixing the origin $F(0)=0$, we also use the fact that near the origin, we have
\[
F(w) = F'(0)w+ \frac{F''(0)}{2}w^2 + O(w^3)\;.
\]
Using this, we obtain the following moment identities for the first two moments that we record below for later use. The proofs are standard and can be also found in~\cite{NicolauSoler2022}.
\begin{lemma}\label{lemma_measure}
Let $F$ be an inner function with $F(0)=0$, and let $\mu_\alpha$ denote its
Aleksandrov--Clark measure at $\alpha\in\mathbb T$. Then
\begin{align}
 \int_{\T}\zeta\,d\mu_\alpha(\zeta)
 &=\alpha\overline{F'(0)}, \label{eqn:FirstMoment}\\
 \int_{\T}\zeta^2\,d\mu_\alpha(\zeta)
 &=\alpha^2\overline{F'(0)}^{\,2}
   +\frac{\alpha\overline{F''(0)}}{2}. \label{eqn:SecondMoment}
\end{align}
\end{lemma}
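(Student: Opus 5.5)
The plan is to plug power series into the disintegration formula \eqref{eqn_theorem:AD} and compare coefficients. Fix $w\in\D$ and apply the Herglotz identity defining $\mu_\alpha$. Expanding both sides in geometric series gives
\[
1+2\sum_{k\ge1}\overline\alpha^{\,k}F(w)^k=1+2\sum_{k\ge1}\Bigl(\int_\T\overline\zeta^{\,k}\,d\mu_\alpha(\zeta)\Bigr)w^k .
\]
The left side converges because $|F(w)|<1$, and the right side converges uniformly in $\zeta$, so we may integrate termwise. Comparing Taylor coefficients of $w^k$ yields, for each $\alpha\in\T$,
\[
\int_\T\overline\zeta^{\,k}\,d\mu_\alpha(\zeta)=\sum_{j\ge1}\overline\alpha^{\,j}\,[w^k]F(w)^j .
\]
Since $F(0)=0$, the power $F^j$ vanishes to order $j$ at the origin. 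Hence only the terms $j\le k$ contribute, and the sum is finite.

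\emph{Case $k=1$.} Only $j=1$ contributes, and $[w]F=F'(0)$, so $\int\overline\zeta\,d\mu_\alpha=\overline\alpha F'(0)$.

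\emph{Case $k=2$.} Here $j=1$ contributes $\overline\alpha\,F''(0)/2$, and $j=2$ contributes $\overline\alpha^{\,2}F'(0)^2$, because $F(w)^2=F'(0)^2w^2+O(w^3)$.

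Since $\mu_\alpha$ is a positive (real) measure, taking complex conjugates converts $\int\overline\zeta^{\,k}\,d\mu_\alpha$ into $\int\zeta^k\,d\mu_\alpha$. This gives exactly \eqref{eqn:FirstMoment} and \eqref{eqn:SecondMoment}.

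The only point needing care is justifying the coefficient comparison. For fixed $\alpha$, both sides are holomorphic in $w$ on $\D$. The right-hand integral is holomorphic because $\mu_\alpha$ is a finite measure and the kernel series converges uniformly for $|w|\le r<1$, which permits interchanging sum and integral. The left-hand series converges locally uniformly since $|F(w)|\le|w|$ by Schwarz's lemma, so it too can be reorganized as a power series in $w$. Uniqueness of Taylor coefficients then finishes the argument. Note that this part uses only the definition of $\mu_\alpha$, not the disintegration theorem itself.
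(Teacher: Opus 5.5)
Your proposal is correct and follows essentially the same route as the paper. Both arguments expand the two sides of the Herglotz identity defining $\mu_\alpha$ as geometric series, compare the coefficients of $w$ and $w^2$ (only $j\le k$ contributes since $F(0)=0$), and then conjugate using the positivity of $\mu_\alpha$. Your justification of the termwise integration and the coefficient comparison, and your remark that only the Herglotz formula (not the disintegration theorem) is needed, are accurate and fill in details the paper leaves implicit.
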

\begin{proof}
Comparing the coefficients of $w$ and $w^2$ in the two expansions above gives
\[
 \int_{\T}\overline\zeta\,d\mu_\alpha(\zeta)
 =\overline\alpha F'(0),
 \qquad
 \int_{\T}\overline\zeta^{\,2}\,d\mu_\alpha(\zeta)
 =\overline\alpha^{\,2}F'(0)^2
  +\frac{\overline\alpha F''(0)}2.
\]
Taking complex conjugates proves the identities.
\end{proof}
We also need the following identity that shows that $X_n$ and $X_m$ are exponentially decorrelated. 
\begin{lemma}
    Let $T_F$ denote the boundary map of $F$, let $X_n=T_F^n$, and put
$\lambda=F'(0)$. Then, for $n<m$,
\begin{equation}\label{inner_cauchy}
 \int_{\mathbb T}X_n(z)\,\overline{X_m(z)}\,dm(z)
 =\overline\lambda^{\,m-n}.
\end{equation}
\end{lemma}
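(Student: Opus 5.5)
We must prove that $\int X_n\overline{X_m}\,dm=\overline\lambda^{\,m-n}$ for $n<m$. The plan has three steps: reduce to the case $n=0$ using invariance of $m$, reduce further to a single application of $T_F$ using the Aleksandrov--Clark measures, and then iterate.

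\emph{Reduction to $n=0$.} Since $F(0)=0$, the boundary map $T_F$ preserves $m$, hence so does $T_F^n$. Writing $X_m=T_F^{m-n}\circ T_F^n$ and $X_n=Z\circ T_F^n$, we get
\[
\int_{\T}X_n\overline{X_m}\,dm=\int_{\T}\bigl(Z\,\overline{T_F^{m-n}}\bigr)\circ T_F^n\,dm=\int_{\T}\zeta\,\overline{T_F^{k}(\zeta)}\,dm(\zeta),\qquad k=m-n\ge1 .
\]
It therefore suffices to show $I_k:=\int_{\T}\zeta\,\overline{T_F^k(\zeta)}\,dm(\zeta)=\overline\lambda^{\,k}$.

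\emph{The case $k=1$.} Apply Aleksandrov's disintegration \eqref{eqn_theorem:AD} with $G(\zeta)=\zeta\,\overline{T_F(\zeta)}$. For a.e. $\alpha$ the measure $\mu_\alpha$ is carried by $F^{-1}(\{\alpha\})$, so $T_F(\zeta)=\alpha$ holds $\mu_\alpha$-a.e. The inner integral is then $\overline\alpha\int\zeta\,d\mu_\alpha(\zeta)$. By \eqref{eqn:FirstMoment} this equals $\overline\alpha\,\alpha\,\overline{\lambda}=\overline\lambda$, and integrating in $\alpha$ gives $I_1=\overline\lambda$.

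\emph{General $k$.} Apply the same disintegration to the observable $G(\zeta)=\zeta\,\overline{h(T_F(\zeta))}$, where $h=T_F^{k-1}$. On the support of $\mu_\alpha$ we have $G(\zeta)=\zeta\,\overline{h(\alpha)}$, so the inner integral is $\alpha\overline\lambda\,\overline{h(\alpha)}$. Hence
\[
I_k=\overline\lambda\int_{\T}\alpha\,\overline{T_F^{k-1}(\alpha)}\,dm(\alpha)=\overline\lambda\, I_{k-1}.
\]
Induction with $I_0=\int|\zeta|^2\,dm=1$ then gives $I_k=\overline\lambda^{\,k}$.

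The only delicate point is justifying that $G$ can be evaluated through $\mu_\alpha$ as stated for a.e. $\alpha$. Two facts are needed:
\begin{itemize}
\item $G$ is bounded and measurable. Modifications of $G$ on $m$-null sets are harmless, since the disintegration \eqref{eqn_theorem:AD} applies to $L^1$ functions and $\mu_\alpha$ does not charge $m$-null sets for a.e. $\alpha$.
\item $\mu_\alpha$ is carried by $F^{-1}(\{\alpha\})$ for a.e. $\alpha$, which the paper takes as known.
\end{itemize}
With these two facts, the computation above is justified for a.e. $\alpha$.
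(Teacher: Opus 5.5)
Your proof is correct. It shares the paper's first step, the reduction to $n=0$ via invariance of $m$, and then takes a different route.

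The paper finishes in one line. It reads $\int_{\T}\zeta\,\overline{T_F^{k}(\zeta)}\,dm(\zeta)$ as the conjugate of the first Taylor coefficient of the holomorphic iterate $F^{k}=F\circ\cdots\circ F$ (Cauchy's formula for a bounded analytic function). The chain rule with $F(0)=0$ then gives $(F^{k})'(0)=\lambda^{k}$. This is shorter, but it tacitly uses that the radial boundary values of the holomorphic iterate $F^{k}$ agree $m$-a.e.\ with the $k$-fold composition $T_F^{k}$ of boundary maps. That is a standard but nontrivial fact, resting on Lindel\"of's theorem and on $T_F$ pulling $m$-null sets back to $m$-null sets.

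Your argument never composes holomorphic functions. It composes only boundary maps and uses only the first Clark moment \eqref{eqn:FirstMoment}, so that fact is not needed. In effect your inductive step is part (1) of Lemma~\ref{lem:regression}, tested against $\overline{h\circ T_F}$ and iterated via invariance. It therefore exhibits the decorrelation as a consequence of $\E[X_0\mid\mathcal F_k]=\overline\lambda^{\,k}X_k$, the same conditional-expectation structure that drives the rest of the paper. The price is some care with representatives, because the Clark measures of an inner function are singular.

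On that point, one sentence in your last paragraph needs fixing. It is false that for a.e.\ $\alpha$ the measure $\mu_\alpha$ charges no $m$-null set: each $\mu_\alpha$ is singular with respect to $m$. What is true, and all you need, is that for each fixed Borel $m$-null set $E$ one has $\mu_\alpha(E)=0$ for a.e.\ $\alpha$, by applying \eqref{eqn_theorem:AD} to $\mathbf 1_E$; the exceptional set of $\alpha$ depends on $E$.

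Concretely, take $T_F$ to be the Borel function given by radial limits, so that $T_F=\alpha$ holds literally on the carrier of $\mu_\alpha$. Take any Borel representative $h$ of $T_F^{k-1}$. Then $h\circ T_F=T_F^{k}$ $m$-a.e., since changing $h$ on an $m$-null set $N$ changes $h\circ T_F$ only on $T_F^{-1}(N)$, which is $m$-null by invariance. The disintegration \eqref{eqn_theorem:AD} then applies to this bounded Borel $G$ exactly as you use it.
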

\begin{proof}
Indeed, invariance of $m$ reduces the integral to
\[\int_{\T}Z\,\overline{T_F^{m-n}}\,dm=\overline{(F^{(m-n)})'(0)}=\overline\lambda^{\,m-n}\;.\]
\end{proof}

We now record the conditional expectation identities. For completeness, we give the proof (also see~\cite{ChenFuLinQiu2026}). We use $\mathcal{B}$ to denote the Borel sigma algebra on $\T$.
\begin{lemma}\label{lem:regression}
Let $f$ be an inner function on $\mathbb D$ with $f(0)=0$, let $T_f$ be its
boundary map, and put $\lambda=f'(0)$ and $a_2=f''(0)/2$. Let $Z(z)=z$.
Then
\begin{enumerate}
 \item $\E[Z\mid T_f^{-1}\Borel]=\overline\lambda T_f$;
 \item $\E[Z^2\mid T_f^{-1}\Borel]=\overline\lambda^{\,2}T^2+\overline{a_2}T_f$.
\end{enumerate}
\end{lemma}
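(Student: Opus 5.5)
Since every $T_f^{-1}\Borel$-measurable function has the form $h\circ T_f$ with $h$ Borel on $\T$, both identities reduce to a duality statement: for every bounded Borel $h$ on $\T$ and $G\in\{Z,Z^2\}$,
\[
\int_{\T} G\,\overline{h\circ T_f}\,dm=\int_{\T}\Phi_G\,\overline{h\circ T_f}\,dm ,
\]
where $\Phi_Z=\overline\lambda T_f$ and $\Phi_{Z^2}=\overline\lambda^{\,2}T_f^2+\overline{a_2}T_f$. Both candidates $\Phi_G$ are plainly $T_f^{-1}\Borel$-measurable and bounded, so this equality of integrals is all that is required.

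The plan is to compute the left side with Aleksandrov's disintegration \eqref{eqn_theorem:AD}, applied with $F=f$ and with the integrand $G\cdot\overline{h\circ T_f}\in L^1(m)$. For $m$-a.e.\ $\alpha$, the measure $\mu_\alpha$ is carried by $T_f^{-1}(\{\alpha\})$, so $h\circ T_f=h(\alpha)$ holds $\mu_\alpha$-a.e. This gives
\[
\int_{\T} G\,\overline{h\circ T_f}\,dm=\int_{\T}\overline{h(\alpha)}\Bigl(\int_{\T}G\,d\mu_\alpha\Bigr)dm(\alpha).
\]
Lemma~\ref{lemma_measure} then evaluates the inner integral. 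It equals $\alpha\overline\lambda$ for $G=Z$, and $\alpha^2\overline\lambda^{\,2}+\alpha\overline{a_2}$ for $G=Z^2$, since $\overline{f''(0)}/2=\overline{a_2}$. In both cases the result has the form $\psi_G(\alpha)$ with $\psi_Z(\alpha)=\overline\lambda\alpha$ and $\psi_{Z^2}(\alpha)=\overline\lambda^{\,2}\alpha^2+\overline{a_2}\alpha$.

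The last step turns $\int \overline{h(\alpha)}\psi_G(\alpha)\,dm(\alpha)$ back into an integral over $z$. Invariance of $m$ under $T_f$, which was recorded as a consequence of the disintegration theorem, gives
\[
\int_{\T}\overline{h}\,\psi_G\,dm=\int_{\T}\overline{h\circ T_f}\,(\psi_G\circ T_f)\,dm ,
\]
and by construction $\psi_G\circ T_f=\Phi_G$. The conditional expectations are therefore as claimed; note that the symbol $T^2$ in (2) is to be read as $T_f^2$, the pointwise square, not the iterate.

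The only delicate point is justifying that $\mu_\alpha$ is concentrated on the fibre $T_f^{-1}(\{\alpha\})$ for $m$-a.e.\ $\alpha$, so that $h\circ T_f$ is $\mu_\alpha$-a.e.\ constant. The paper states this as a property of Clark measures of inner functions, so I will take it as given. Everything else is bookkeeping with bounded functions, where Fubini-type exchanges are covered by the $L^1$ statement of \eqref{eqn_theorem:AD}.
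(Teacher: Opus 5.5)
Your proposal is correct and follows the paper's proof essentially verbatim: test against functions of $T_f$, evaluate the integral via Aleksandrov's disintegration together with the moment formulas of Lemma~\ref{lemma_measure}, and transport back using the invariance of $m$. The only difference is that you state explicitly two points the paper uses tacitly: the fibre-concentration step ($h\circ T_f=h(\alpha)$ holds $\mu_\alpha$-a.e.), and that $T^2$ means the pointwise square $T_f^2$.
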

\begin{proof}
It is enough to test against functions $\psi\circ T_f$, with
$\psi\in L^\infty(\T_f)$. By Aleksandrov--Clark disintegration and
\eqref{eqn:FirstMoment},
\begin{align*}
 \int_{\T}Z(z)\psi(T_f(z))\,dm(z)
 &=\int_{\T}\psi(\alpha)
   \left(\int_{\T}\zeta\,d\mu_\alpha(\zeta)\right)dm(\alpha)\\
 &=\overline\lambda\int_{\T}\alpha\psi(\alpha)\,dm(\alpha)\\
 &=\int_{\T}\overline\lambda T_f(z)\psi(T_f(z))\,dm(z),
\end{align*}
where the last equality uses the invariance of $m$. This proves (1).
Likewise, \eqref{eqn:SecondMoment} gives
\begin{align*}
 \int_{\T}Z(z)^2\psi(T_fz)\,dm(z)
 &=\int_{\T}\psi(\alpha)
   \left(\alpha^2\overline\lambda^{\,2}
         +\alpha\overline{a_2}\right)dm(\alpha)\\
 &=\int_{\T}
   \left(\overline\lambda^{\,2}T_f(z)^2+\overline{a_2}T_f(z)\right)
   \psi(Tz)\,dm(z).
\end{align*}
This proves (2).
\end{proof}

\subsection{Martingales and reverse martingale differences}\label{MRM}
We collect some basic identities that will be useful in showing that $S_N^{(b)}$ is close to a martingale. We begin with the standard definitions for completeness. These definitions can be found in any standard graduate probability textbook.  See, for example,
\cite[Chapters~10--12]{Williams1991}.
An increasing (decreasing) filtration is a
sequence of $\sigma$-algebras $(\mathcal F_n)_{n\ge0}$ satisfying
$\mathcal F_0\subseteq\mathcal F_1\subseteq\cdots$ (
$\mathcal F_0\supseteq\mathcal F_1\supseteq\cdots$). A sequence of random variables $(X_n)_{n\geq 0}$ is adapted to a sequence of sigma-algebras $(\mathcal{F}_n)_{n\geq 0}$ if $X_n$ is $\mathcal{F}_n$ measurable for each $n$. A sequence $(X_n)$ of random variables adapted to an increasing (decreasing) filtration is a martingale (reverse martingale) if
$\E(X_{n+1}\mid\mathcal F_n)=X_n$ ($\E(X_{n}\mid\mathcal F_{n+1})=X_{n+1}$). A sequence of random variables $(D_n)_{n\geq 0}$ adapated to an (decreasing) increasing filtration $(\mathcal{F}_n)$ is a (reverse) martingale-difference sequence if ($\E[D_n\mid\mathcal{F}_{n+1}]=0$) $\E[D_n\mid \mathcal F_{n-1}]=0$. The partial sums of a martingale difference sequence give a martingale. However, the partial sums of the reverse martingale difference sequences do not form a reverse martingale.

Put $X_n=T^n$ and let $\mathcal F_n=\sigma(X_n)$ be the sigma algebra generated by $X_n$. It is easy to see that $(\mathcal{F}_n)_{n\geq 0}$ is decreasing filtration. Set $D_n=X_n-\overline\lambda X_{n+1}$. As an immediate consequence of Lemma~\ref{lem:regression}, we get that $D_n$ is a reverse martingale difference sequence. This observation was already made in~\cite{ChenFuLinQiu2026}.

\begin{lemma}\label{cor:MomentsOfDn}
For every $n\ge0$,
\begin{align*}
 \E[D_n\mid\mathcal F_{n+1}]&=0,\\
 \E[|D_n|^2\mid\mathcal F_{n+1}]&=1-|\lambda|^2,\\
 \E[D_n^2\mid\mathcal F_{n+1}]&=\overline{a_2}X_{n+1}.
\end{align*}
\end{lemma}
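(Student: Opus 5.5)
The plan is to reduce everything to Lemma~\ref{lem:regression}, applied not to $Z$ itself but transported along the iterates by measure invariance. First I would identify the conditional expectation with respect to $\mathcal F_{n+1}=\sigma(X_{n+1})$ in terms of the conditional expectation with respect to $T_f^{-1}\Borel$. Since $X_{n+1}=T_f\circ X_n$, we have $\mathcal F_{n+1}=X_n^{-1}(T_f^{-1}\Borel)$. Moreover, $X_n=T_f^n$ preserves $m$, because each iterate is an inner function fixing the origin and hence, by the consequence of Aleksandrov's disintegration theorem \eqref{eqn_theorem:AD}, preserves $m$.

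The general fact I would use is the transfer principle. If $S$ is $m$-preserving and $g\in L^1(m)$, then
\[
\E[g\circ S\mid S^{-1}\mathcal G]=\E[g\mid\mathcal G]\circ S .
\]
To check it, write $\E[g\mid\mathcal G]=h$ with $h$ being $\mathcal G$-measurable. Then $h\circ S$ is $S^{-1}\mathcal G$-measurable. For any test function $\psi$ that is $\mathcal G$-measurable and bounded, invariance gives
\[
\int g\circ S\,\psi\circ S\,dm=\int g\psi\,dm=\int h\psi\,dm=\int h\circ S\,\psi\circ S\,dm .
\]
Applying this with $S=X_n$ and $\mathcal G=T_f^{-1}\Borel$, Lemma~\ref{lem:regression} yields
\[
\E[X_n\mid\mathcal F_{n+1}]=\overline\lambda X_{n+1},\qquad \E[X_n^2\mid\mathcal F_{n+1}]=\overline\lambda^{\,2}X_{n+1}^2+\overline{a_2}X_{n+1}.
\]

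The three identities then follow by algebra, using that $X_{n+1}$ is $\mathcal F_{n+1}$-measurable with $|X_{n+1}|=1$ a.e.

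For the first identity, $\E[D_n\mid\mathcal F_{n+1}]=\overline\lambda X_{n+1}-\overline\lambda X_{n+1}=0$.

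For the second, $D_n^2=X_n^2-2\overline\lambda X_nX_{n+1}+\overline\lambda^{\,2}X_{n+1}^2$. Taking conditional expectations gives
\[
\overline\lambda^{\,2}X_{n+1}^2+\overline{a_2}X_{n+1}-2\overline\lambda^{\,2}X_{n+1}^2+\overline\lambda^{\,2}X_{n+1}^2=\overline{a_2}X_{n+1}.
\]
This is the third claimed identity.

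For the remaining identity, $|X_n|=1$ a.e., so
\[
|D_n|^2=1+|\lambda|^2-2\Ree\bigl(\lambda X_n\overline{X_{n+1}}\bigr).
\]
Conditioning, $\E[X_n\overline{X_{n+1}}\mid\mathcal F_{n+1}]=\overline{X_{n+1}}\,\overline\lambda X_{n+1}=\overline\lambda$. Hence the cross term becomes $-2|\lambda|^2$, and the conditional second moment equals $1-|\lambda|^2$.

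The only step needing care is the transfer principle together with the identification $\mathcal F_{n+1}=X_n^{-1}(T_f^{-1}\Borel)$. This holds because $\sigma(T_f\circ X_n)=X_n^{-1}(\sigma(T_f))$ and $\sigma(T_f)=T_f^{-1}\Borel$. Everything else is routine.
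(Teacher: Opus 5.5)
Your proposal is correct and follows the paper's proof. You transport Lemma~\ref{lem:regression} along $X_n=T_f^n$ using invariance of $m$; your transfer principle and the identification $\mathcal F_{n+1}=X_n^{-1}(T_f^{-1}\Borel)$ just spell out the paper's phrase ``after composing with $T^n$''. Then you expand $D_n^2$ and $|D_n|^2$ using $|X_n|=|X_{n+1}|=1$ and $\E(X_n\overline{X_{n+1}}\mid\mathcal F_{n+1})=\overline\lambda$, as the paper does.
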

\begin{proof}
Apply Lemma~\ref{lem:regression} after composing with $T^n$. The first and the third formulas follow by direct expansion. For the second equality, we use the fact that $|X_n|=|X_{n+1}|=1$ along with
$\E(X_n\overline{X_{n+1}}\mid\mathcal F_{n+1})=\overline\lambda$.
\end{proof}

We end this sequence with the following simple observation that follows immediately from Lemma~\ref{cor:MomentsOfDn}. 
\begin{corollary}\label{cor:reversemartingale}
For every deterministic complex sequence $(\beta_n)$, the variables
$Y_n=\beta_nD_n$ satisfy
\[
 Y_n\in L^\infty(\mathcal F_n),\qquad
 \E[Y_n\mid\mathcal F_{n+1}]=0.
\]
Thus $(Y_n,\mathcal F_n)$ is a reverse martingale-difference sequence, and we
write $M_N=\sum_{n=0}^{N-1}Y_n$ for its partial sums. If
$v_N^2=(1-|\lambda|^2)\sum_{n=0}^{N-1}|\beta_n|^2$, then
\begin{align}
 \frac1{v_N^2}\sum_{n=0}^{N-1}
 \E[|Y_n|^2\mid\mathcal F_{n+1}]&=1,
 \label{eqn:QuadraticVariation}\\
 \frac1{v_N^2}\sum_{n=0}^{N-1}
 \E[Y_n^2\mid\mathcal F_{n+1}]&=
 \frac{\overline{a_2}}{v_N^2}
 \sum_{n=0}^{N-1}\beta_n^2X_{n+1}.
 \label{eqn:QuadraticCov}
\end{align}
\end{corollary}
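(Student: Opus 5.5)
The claim follows directly from \cref{cor:MomentsOfDn}, because $\beta_n$ is deterministic and can be pulled out of conditional expectations. For measurability, $D_n=X_n-\overline\lambda X_{n+1}$ and $X_{n+1}=T_f\circ X_n$ is a function of $X_n$. Hence $D_n$ is $\mathcal F_n=\sigma(X_n)$-measurable. It is also bounded, since $|D_n|\le 1+|\lambda|\le 2$. So $Y_n=\beta_nD_n\in L^\infty(\mathcal F_n)$.

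The first identity of \cref{cor:MomentsOfDn} gives
\[
\E[Y_n\mid\mathcal F_{n+1}]=\beta_n\,\E[D_n\mid\mathcal F_{n+1}]=0 .
\]
Together with adaptedness, this shows that $(Y_n,\mathcal F_n)$ is a reverse martingale-difference sequence.

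For \eqref{eqn:QuadraticVariation}, take the modulus squared and use the second identity of \cref{cor:MomentsOfDn}:
\[
\E[|Y_n|^2\mid\mathcal F_{n+1}]=|\beta_n|^2\,\E[|D_n|^2\mid\mathcal F_{n+1}]=|\beta_n|^2(1-|\lambda|^2).
\]
Summing over $0\le n<N$ gives exactly $v_N^2$. Dividing by $v_N^2$ yields $1$, provided $v_N^2>0$. This holds because $|\lambda|<1$, since $f$ is not a rotation (Schwarz lemma), as long as some $\beta_n\neq0$ with $n<N$.

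For \eqref{eqn:QuadraticCov}, note that $Y_n^2=\beta_n^2D_n^2$, with no conjugation. The third identity of \cref{cor:MomentsOfDn} then gives
\[
\E[Y_n^2\mid\mathcal F_{n+1}]=\beta_n^2\,\overline{a_2}\,X_{n+1}.
\]
Summing over $n$ and dividing by $v_N^2$ produces the stated formula.

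No step here is a real obstacle. The only points needing care are the measurability of $D_n$ with respect to $\mathcal F_n$ and keeping track of conjugation when passing from $|Y_n|^2$ to $Y_n^2$.
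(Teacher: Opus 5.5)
Your proposal is correct and follows the paper's route: the paper also obtains the corollary immediately from Lemma~\ref{cor:MomentsOfDn} by pulling the deterministic $\beta_n$ out of the conditional expectations. Your extra remarks, that $D_n$ is $\mathcal F_n$-measurable and that $v_N^2>0$ is needed (i.e.\ $|\lambda|<1$ and some $\beta_n\neq0$ with $n<N$), are details the paper leaves implicit.
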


In Section~\ref{Sec:LILForInnerFunction}, we show that by choosing an appropriate sequence $(\beta_n)_{n\geq 0}$ depending on $(b_n)_{n\geq 0}$, the partial sum of reverse martingale differences $\sum_{n=0}^{N}\beta_nD_n$ is close to $S_N^{(b)}$. This allows us to transfer the LIL for the partial sums of the reverse martingale difference sequence to the LIL for $S_N^{(b)}$.

\section{LIL for the partial sums of reverse martingale differences}
\label{sec:ReverseMartingaleLIL}
In this section, we prove Theorem~\ref{thm:LILforReverseMartingale}. The proof is long and relies on several auxiliary lemmas that we state and prove in this section. Throughout this section $\langle\cdot,\cdot\rangle$ is the Euclidean inner
product on $\mathbb R^2$, and $A\preceq B$ denotes the Loewner order on
symmetric matrices.

\begin{lemma}[Bernstein bound for finite reverse blocks]
\label{lem:reverse-freedman}
Let $(\xi_n,\mathcal F_n)$ be a real reverse martingale
difference sequence, and assume $\xi_n\le c$ a.s. Put
\[
 S_{r,m}=\sum_{n=r}^{m-1}\xi_n,
 \qquad
 V_{r,m}=\sum_{n=r}^{m-1}\E(\xi_n^2\mid\mathcal F_{n+1}).
\]
Then, for $x,\sigma^2>0$,
\begin{equation}\label{eqn:BernsteinBound}
 \mathbb P\left\{
 \max_{r\le m\le s}S_{m,s}\ge x,\ V_{r,s}\le\sigma^2
 \right\}
 \le \exp\left(-\frac{x^2}{2(\sigma^2+cx/3)}\right).
\end{equation}
The analogous estimate holds for $-\xi_n$ when $\xi_n\ge-c$.
\end{lemma}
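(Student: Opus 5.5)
Since the partial sums $S_{m,s}$ of a reverse martingale difference sequence are not themselves a reverse martingale, we reverse time inside the finite block and reduce to Freedman's inequality for ordinary martingales. Fix $r<s$ and, for $k=0,1,\dots,s-r$, set
\[
\mathcal G_k=\mathcal F_{s-k},\qquad \eta_k=\xi_{s-k}\quad(1\le k\le s-r).
\]
Because $(\mathcal F_n)$ is decreasing, $(\mathcal G_k)_{k=0}^{s-r}$ is increasing. Each $\eta_k$ is $\mathcal G_k$-measurable, and $\E(\eta_k\mid\mathcal G_{k-1})=\E(\xi_{s-k}\mid\mathcal F_{s-k+1})=0$, so $(\eta_k,\mathcal G_k)$ is a genuine martingale difference sequence with $\eta_k\le c$.

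Now compare the two sets of partial sums. With $R_j=\sum_{k=1}^{j}\eta_k$ we have $R_j=\sum_{n=s-j}^{s-1}\xi_n=S_{s-j,s}$. Hence
\[
\max_{r\le m\le s}S_{m,s}=\max_{0\le j\le s-r}R_j .
\]
The predictable quadratic variation is
\[
W_j=\sum_{k=1}^{j}\E(\eta_k^2\mid\mathcal G_{k-1})=\sum_{n=s-j}^{s-1}\E(\xi_n^2\mid\mathcal F_{n+1}),
\]
which is nondecreasing in $j$, and $W_{s-r}=V_{r,s}$.

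The event in \eqref{eqn:BernsteinBound} is therefore $\{\max_{j\le s-r}R_j\ge x,\ W_{s-r}\le\sigma^2\}$. Since $W$ is nondecreasing, this event is contained in
\[
\{\exists j:\ R_j\ge x,\ W_j\le\sigma^2\}.
\]
Freedman's inequality for martingales with increments bounded above by $c$ bounds the probability of this last event by $\exp\bigl(-x^2/(2(\sigma^2+cx/3))\bigr)$.

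For completeness, we will include the standard supermartingale proof of Freedman's inequality instead of merely citing it. For $\theta>0$ put $\phi(u)=e^u-1-u$. Since $\phi(u)/u^2$ is increasing, $\eta_k\le c$ gives
\[
\E(e^{\theta\eta_k}\mid\mathcal G_{k-1})\le 1+\frac{\phi(\theta c)}{c^2}\,\E(\eta_k^2\mid\mathcal G_{k-1})\le \exp\Bigl(\frac{\phi(\theta c)}{c^2}\,\E(\eta_k^2\mid\mathcal G_{k-1})\Bigr).
\]
It follows that $U_j=\exp\bigl(\theta R_j-\phi(\theta c)W_j/c^2\bigr)$ is a nonnegative supermartingale with $U_0=1$.

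Let $\tau$ be the first $j$ with $R_j\ge x$. On the event in question we have $\tau\le s-r$ and $W_\tau\le\sigma^2$, so $U_\tau\ge \exp\bigl(\theta x-\phi(\theta c)\sigma^2/c^2\bigr)$ there. Optional stopping at the bounded time $\tau\wedge(s-r)$ then gives
\[
\mathbb P\le \exp\bigl(-\theta x+\phi(\theta c)\sigma^2/c^2\bigr).
\]
Choosing $\theta=c^{-1}\log(1+cx/\sigma^2)$ yields the Bennett bound $\exp\bigl(-(\sigma^2/c^2)h(cx/\sigma^2)\bigr)$ with $h(u)=(1+u)\log(1+u)-u$. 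The elementary inequality $h(u)\ge u^2/(2(1+u/3))$ turns this into \eqref{eqn:BernsteinBound}. The case $c=0$ follows by letting $c\downarrow0$, or directly.

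The statement for $-\xi_n$ when $\xi_n\ge -c$ follows by applying the above to the reverse martingale differences $-\xi_n$, which are bounded above by $c$ and have the same conditional variances.

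The one point needing care is the time reversal. The maximum must run over the left endpoint $m$ with the right endpoint $s$ fixed, and the variance condition is imposed at the full block $V_{r,s}$. This is exactly the shape that becomes a forward martingale maximum with a terminal (hence monotone) variance constraint. Nothing else is delicate.
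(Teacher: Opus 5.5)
Your proposal is correct and follows the paper's proof: the paper likewise sets $\mathcal G_j=\mathcal F_{s-j}$, $\eta_j=\xi_{s-j}$ and applies Freedman's maximal Bernstein inequality to the reversed block, while you also prove Freedman's inequality in full (supermartingale, optional stopping, then Bennett to Bernstein) instead of citing it. You also verify explicitly two points the paper leaves implicit: that $\max_{r\le m\le s}S_{m,s}$ becomes a forward running maximum, and that $V_{r,s}$ is the terminal predictable variation of the reversed martingale.
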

\begin{proof}
Set $\mathcal G_j=\mathcal F_{s-j}$ and $\eta_j=\xi_{s-j}$ for
$1\le j\le s-r$. Then $(\eta_j,\mathcal G_j)$ is an ordinary martingale-
difference sequence, and \eqref{eqn:BernsteinBound} is Freedman's maximal
Bernstein inequality~\cite[Theorem~1.6]{Freedman1975} applied to the reversed block.
\end{proof}

We now prove the upper LIL for the partial sums of a real-valued reverse martingale difference sequence. This may be of independent interest. 

\begin{proposition}[Upper LIL bound for real reverse martingale]
\label{lem:real-reverse-lil-upper}
Let $(\xi_n,\mathcal F_n)$ be a real reverse martingale difference sequence satisfying
$|\xi_n|\le b_n$ almost surely, where $b_n$ is deterministic. Put
\[
 S_N=\sum_{n=0}^{N-1}\xi_n,
 \qquad B_N=\sum_{n=0}^{N-1}\E(\xi_n^2\mid\mathcal F_{n+1}),
 \qquad b_N^*=\max_{0\le n<N}b_n.
\]
Let $s_N>0$ be deterministic and nondecreasing, with $s_N\to\infty$ and
$s_{N+1}/s_N\to1$. If
\[
 \frac{B_N}{s_N^2}\to1\quad\text{a.s.},
 \qquad
 \frac{b_N^*\sqrt{L(L(s_N^2))}}{s_N}\to0,
\]
then
\[
 \limsup_{N\to\infty}
 \frac{S_N}{\sqrt{2s_N^2L(L(s_N^2))}}\le1
 \qquad\text{a.s.}
\]
\end{proposition}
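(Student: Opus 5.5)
We prove the upper bound with the classical Kolmogorov blocking argument, using the maximal Bernstein bound of Lemma~\ref{lem:reverse-freedman} in place of the exponential inequality for independent sums. Fix $\varepsilon>0$ and $\theta>1$ close to $1$. Put $\phi(N)=\sqrt{2s_N^2L(L(s_N^2))}$. Choose indices $N_k=\min\{N: s_N^2\ge\theta^k\}$. Since $s_{N+1}/s_N\to1$ and $s_N\to\infty$, we have $s_{N_k}^2\sim\theta^k$, and the $N_k$ are eventually strictly increasing.

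The main difficulty is that the reverse structure runs the wrong way. In the reversed time of Lemma~\ref{lem:reverse-freedman}, the maximal inequality controls tail sums $S_{m,s}=\sum_{n=m}^{s-1}\xi_n$ for $m$ between $r$ and $s$, not the initial partial sums $S_m$. We handle this with the identity $S_m=S_{0,N_{k+1}}-S_{m,N_{k+1}}$. For $N_k\le m\le N_{k+1}$ this gives
\[
 S_m\le S_{0,N_{k+1}}+\max_{0\le m\le N_{k+1}}\bigl(-S_{m,N_{k+1}}\bigr).
\]
This alone is too crude, because a tail sum over $[m,N_{k+1})$ with $m$ near $0$ has the full variance. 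So we split once more:
\[
 S_m=S_{0,N_k}+S_{N_k,m}, \qquad S_{N_k,m}=S_{N_k,N_{k+1}}-S_{m,N_{k+1}}.
\]
The term $S_{0,N_k}$ is a single sum with variance about $s_{N_k}^2$. The remainder $S_{N_k,m}$ is controlled by two quantities. The first is $S_{N_k,N_{k+1}}$. The second is $\max_{N_k\le m\le N_{k+1}}(-S_{m,N_{k+1}})$, which Lemma~\ref{lem:reverse-freedman} bounds exactly (applied to $-\xi_n$ with $r=N_k$, $s=N_{k+1}$). Both have conditional variance $V_{N_k,N_{k+1}}$. Since $B_N/s_N^2\to1$, this is at most $(\theta-1+\varepsilon)\theta^{k}$ eventually.

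The key step is to apply Lemma~\ref{lem:reverse-freedman} to $S_{0,N_k}$ with $r=s=\cdot$ trivial maximum, $x=(1+\varepsilon)\phi(N_k)$ and $\sigma^2=(1+\varepsilon)s_{N_k}^2$. The hypothesis on $b_N^*$ makes the term $cx/3$ equal to $o(s_{N_k}^2)$. The bound is then $\exp(-(1+\varepsilon)'L(L(\theta^k)))\approx (k\log\theta)^{-(1+\varepsilon')}$, which is summable. For the remainder terms, take $x=\delta\phi(N_k)$ and $\sigma^2=2(\theta-1)\theta^k$. The exponent is about $\delta^2 L(L(\theta^k))/(2(\theta-1))$. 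This is summable once $\theta-1<\delta^2/4$.

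The events $\{V\le\sigma^2\}$ fail only finitely often almost surely. This follows from $B_N/s_N^2\to1$ applied at $N_k$ and $N_{k+1}$, noting $V_{N_k,N_{k+1}}=B_{N_{k+1}}-B_{N_k}$. Borel--Cantelli then gives, eventually for $N_k\le m\le N_{k+1}$,
\[
 S_m\le(1+\varepsilon)\phi(N_k)+3\delta\phi(N_k)\le(1+\varepsilon+3\delta)\phi(m),
\]
using that $\phi$ is nondecreasing. Letting $\varepsilon,\delta\downarrow0$ along countable sequences yields the claim. I expect the delicate point to be bookkeeping the Freedman bound on the random-variance event, especially checking that the $cx/3$ term is negligible. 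This uses precisely the condition $b_N^*\sqrt{L(L(s_N^2))}/s_N\to0$, since $cx\approx b^*_{N_{k+1}}\phi(N_k)=o(s_{N_k}^2)$, together with $s_{N_{k+1}}\sim\sqrt\theta\,s_{N_k}$.
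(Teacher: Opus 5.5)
Your proof is correct and follows essentially the paper's route. Both arguments use geometric blocking $N_k$, apply Freedman's bound to $S_{N_k}$ on the event $B_{N_k}\le(1+\varepsilon)s_{N_k}^2$, and apply the maximal reverse Freedman bound (for $-\xi_n$) to the backward tails $-S_{m,N_{k+1}}$, $N_k\le m\le N_{k+1}$, through the identity $S_m=S_{N_{k+1}}-S_{m,N_{k+1}}$. The only difference is bookkeeping: you also split $S_{N_{k+1}}=S_{N_k}+S_{N_k,N_{k+1}}$ and take $\theta-1\ll\delta^2$, so the block terms are at most $\delta\phi(N_k)$; the paper instead bounds $S_{N_{k+1}}$ by $x_{k+1}$ and sends $\rho\downarrow1$ at the end.
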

\begin{proof}
Fix $\rho>1$ and $\varepsilon,\delta>0$, and work on the full-measure set
where $B_N/s_N^2\to1$. Let $N_k$ be the least integer with
$s_{N_k}^2\ge\rho^k$. Then
\[
 s_{N_k}^2\sim\rho^k,
 \qquad d_k^2:=s_{N_{k+1}}^2-s_{N_k}^2\sim(\rho-1)\rho^k,
 \qquad L(L(s_{N_k}^2))\sim\log k.
\]
Put
\[
 x_k=(1+\varepsilon)\sqrt{2(1+\delta)s_{N_k}^2\log k},
 \qquad
 y_k=(1+\varepsilon)\sqrt{2(1+\delta)d_k^2\log k}.
\]
The assumption $b_N^{*}\sqrt{LL(s_N^2)}/s_N\to 0$ implies
\[
 \frac{b_{N_k}^*x_k}{s_{N_k}^2}\to 0,
 \qquad
 \frac{b_{N_{k+1}}^*y_k}{d_k^2}\to 0.
\]
Moreover,
\[
 B_{N_{k+1}}-B_{N_k}
 =d_k^2+o(s_{N_{k+1}}^2)+o(s_{N_k}^2)
 =d_k^2+o(\rho^k),
\]
so, eventually,
\[
 B_{N_k}\le(1+\delta)s_{N_k}^2,
 \qquad
 B_{N_{k+1}}-B_{N_k}\le(1+\delta)d_k^2.
\]
Using Lemma~\ref{lem:reverse-freedman}, first on $[0,N_k)$ and then for
$-\xi_n$ on $[N_k,N_{k+1})$, we obtain
\begin{align*}
 \mathbb P\{S_{N_k}\ge x_k,\ B_{N_k}\le(1+\delta)s_{N_k}^2\}
 &\le k^{-1-\eta_1},\\
 \mathbb P\left\{
 \max_{N_k\le m<N_{k+1}}(S_m-S_{N_{k+1}})\ge y_k,
 B_{N_{k+1}}-B_{N_k}\le(1+\delta)d_k^2
 \right\}
 &\le k^{-1-\eta_2}
\end{align*}
for some $\eta_1,\eta_2>0$ and all large $k$. By Borel--Cantelli, if
$N_k\le N<N_{k+1}$, then eventually
$S_N\le x_{k+1}+y_k$. Consequently,
\[
 \limsup_{N\to\infty}
 \frac{S_N}{\sqrt{2s_N^2L(L(s_N^2))}}
 \le(1+\varepsilon)\sqrt{1+\delta}
       (\sqrt\rho+\sqrt{\rho-1}).
\]
Letting $\varepsilon\downarrow0$, $\delta\downarrow0$, and then
$\rho\downarrow1$ proves the assertion.
\end{proof}

In the proof of Theorem~\ref{thm:LILforReverseMartingale}, we apply Proposition~\ref{lem:real-reverse-lil-upper} to the one-dimensional projections $\xi_n=\langle u, \beta_nD_n\rangle$ over a countable dense collection of $u$ in $\T$. Using a standard continuity argument, this gives that the cluster set of $S_N^{(b)}/\sqrt{\sigma_N^2(f)LL(\sigma_N^2(f))}$ is contained in the closed unit disk. 

We now begin the preparation for proving the reverse containment. 

\begin{lemma}[Conditional exponential-tilt estimates]
\label{lem:conditional-tilt-estimates}
Let $X$ be an $\mathbb R^2$-valued random variable, let $\mathcal G$ be a
sigma-field, and assume
\[
 \E(X\mid\mathcal G)=0,\qquad |X|\le d.
\]
Put $A=\E(XX^T\mid\mathcal G)$ and
$\psi(\theta)=\log\E(e^{\langle\theta,X\rangle}\mid\mathcal G)$. There is a
universal constant $C$ such that, whenever $|\theta|d\le1/2$,
\begin{align*}
 \left|\psi(\theta)-\frac12\theta^TA\theta\right|
 &\le C|\theta|d\,\theta^TA\theta,\\
 \left|\nabla\psi(\theta)-A\theta\right|
 &\le Cd\,\theta^TA\theta,\\
 \tr\nabla^2\psi(\theta)&\le C\tr A.
\end{align*}
\end{lemma}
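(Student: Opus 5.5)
We will work conditionally on $\mathcal G$ throughout. By the existence of regular conditional distributions for the $\mathbb R^2$-valued variable $X$, it suffices to prove the three estimates for a single centred random vector with $\E X=0$, $|X|\le d$ and $A=\E XX^T$; the universal constant $C$ then passes to the conditional version verbatim. Set $t=\langle\theta,X\rangle$. The hypothesis gives $|t|\le|\theta|d\le1/2$, and we have $\E t=0$ and $\E t^2=\theta^TA\theta$. The whole argument rests on elementary Taylor bounds for $e^t$ on $|t|\le 1/2$.

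\proofstep{Step 1: the first estimate}
Let $M(\theta)=\E e^{t}$. Since $|e^t-1-t-t^2/2|\le |t|^3e^{1/2}/6\le C|t|^3$ and $|t|^3\le |\theta|d\,t^2$, we obtain
\[
M(\theta)=1+\tfrac12\theta^TA\theta+R,\qquad |R|\le C|\theta|d\,\theta^TA\theta .
\]
Note also that $\theta^TA\theta\le|\theta|^2d^2\le1/4$, so $u:=M-1\in[0,1/4+C/8]$. Certainly $M\ge e^{-1/2}$. We then write $\psi=\log(1+u)$ and use $|\log(1+u)-u|\le Cu^2$. Since $u\le C\theta^TA\theta$, we have $u^2\le C(\theta^TA\theta)^2\le C|\theta|^2d^2\,\theta^TA\theta\le C|\theta|d\,\theta^TA\theta$. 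Combining this with the bound on $R$ gives the first inequality.

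\proofstep{Step 2: the gradient}
We have $\nabla\psi=\E(Xe^t)/M$. Using $\E X=0$, $\E(Xe^t)=\E(X(e^t-1-t))+A\theta$, and $|X(e^t-1-t)|\le d\,C t^2$, we get $|\E(Xe^t)-A\theta|\le Cd\,\theta^TA\theta$. Dividing by $M$ introduces the factor $M^{-1}-1=O(\theta^TA\theta)$ multiplying $|A\theta|$. The only delicate point in the whole proof is controlling this extra term $|A\theta|\,\theta^TA\theta$ by $d\,\theta^TA\theta$. It is handled by $|A\theta|=|\E(Xt)|\le d\,\E|t|\le d(\theta^TA\theta)^{1/2}$ together with $(\theta^TA\theta)^{1/2}\le1/2$, which gives
\[
|A\theta|\,\theta^TA\theta\le Cd\,\theta^TA\theta .
\]
This yields the second inequality.

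\proofstep{Step 3: the trace of the Hessian}
We have $\nabla^2\psi=\E(XX^Te^t)/M-\nabla\psi\nabla\psi^T$. The second term is negative semidefinite, so
\[
\tr\nabla^2\psi\le \E(|X|^2e^t)/M\le e^{1/2}\,\tr A/e^{-1/2}=e\,\tr A ,
\]
using $\E|X|^2=\tr A$. This gives the third inequality with a universal constant.
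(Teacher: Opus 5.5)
Your proof is correct and follows essentially the same route as the paper: a Taylor bound for $e^{t}$ on $|t|\le 1/2$ and then the logarithm for the first estimate, the same expansion for the gradient, and the Hessian viewed as a tilted covariance for the trace bound. You also make explicit two points the paper's terse argument (``division by $\E(e^Z\mid\mathcal G)\ge1$'') leaves implicit: the control of the extra term $A\theta\,(M^{-1}-1)$ via $|A\theta|\le d(\theta^TA\theta)^{1/2}$, and the reduction to a regular conditional distribution.
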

\begin{proof}
Write $Z=\langle\theta,X\rangle$ and $q=\E(Z^2\mid\mathcal G)
=\theta^TA\theta$. Since $\E(Z\mid\mathcal G)=0$ and $|Z|\le1/2$,
Taylor's formula gives
\[
 \E(e^Z\mid\mathcal G)=1+\frac q2+O(|\theta|d\,q).
\]
Taking the logarithm proves the first inequality. Similarly,
\[
 \E(Xe^Z\mid\mathcal G)=A\theta+O(dq),
\]
and division by $\E(e^Z\mid\mathcal G)\ge1$ proves the second inequality. Finally, $\nabla^2\psi(\theta)$ is the
conditional covariance of $X$ under the exponentially tilted conditional law.
Therefore
\[
 \tr\nabla^2\psi(\theta)
 \le\frac{\E(|X|^2e^Z\mid\mathcal G)}
          {\E(e^Z\mid\mathcal G)}
 \le e^{1/2}\tr A,
\]
which proves the last inequality.
\end{proof}
\begin{lemma}
\label{lem:completed-block}
Let $I_k=[r_k,s_k)$ be finite intervals and put
\[
 B_k=\sum_{n=r_k}^{s_k-1}Y_n,
 \qquad
 Q_{I_k}=\sum_{n=r_k}^{s_k-1}
 \E(Y_nY_n^T\mid\mathcal F_{n+1}),
 \qquad
 c_{I_k}=\max_{r_k\le n<s_k}c_n.
\]
Let $V_k>0$, $\Lambda_k\to\infty$, $a_k=V_k\sqrt{\Lambda_k}$, and let
$\delta_k\downarrow0$. Assume
\begin{equation}\label{eq:completed-block-small-increments}
 \frac{c_{I_k}\sqrt{\Lambda_k}}{V_k}\longrightarrow0.
\end{equation}
On an extension of the probability space carrying independent auxiliary
variables, one can construct
\[
 \widetilde B_k=\widehat B_k+E_k
\]
with the following properties.
\begin{enumerate}
 \item If
 $Q_{I_k}\preceq(\frac12+\delta_k)V_k^2I_2$, then
 $\widehat B_k=B_k$. If, in addition,
 $\|Q_{I_k}/V_k^2-I_2/2\|_{\mathrm{op}}\le\delta_k$, then, conditionally on
 $\mathcal F_{r_k}$, $E_k$ is a sum of bounded independent centered vectors
 whose total covariance matrix is
 \[
 R_k=\left(\frac12+\delta_k\right)V_k^2I_2-Q_{I_k},
 \qquad 0\preceq R_k\preceq2\delta_kV_k^2I_2.
 \]
 \item For every fixed $w\in\mathbb R^2$ and every $\eta>0$,
 \begin{equation}\label{eq:completed-block-lower}
 \widetilde{\mathbb P}\left(
 |\widetilde B_k-wa_k|\le\eta a_k\mid\mathcal F_{s_k}
 \right)
 \ge
 \exp\left\{-(|w|^2+2\eta|w|+o(1))\Lambda_k\right\},
 \end{equation}
 where the $o(1)$ is deterministic.
 \item On the event
 $\|Q_{I_k}/V_k^2-I_2/2\|_{\mathrm{op}}\le\delta_k$, for every $\eta>0$,
 \begin{equation}\label{eq:completion-tail}
 \widetilde{\mathbb P}\left(
 |E_k|>\eta a_k\mid\mathcal F_{r_k}
 \right)
 \le4\exp\left(
 -\frac{c\eta^2\Lambda_k}{\delta_k+\eta/\Lambda_k}
 \right)
 \end{equation}
 for a universal $c>0$ and all sufficiently large $k$.
\end{enumerate}
\end{lemma}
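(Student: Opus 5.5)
The plan is to build $\widetilde B_k$ in two stages, both organized in \emph{reversed time} inside the block. That is, we run $n=s_k-1,s_k-2,\dots,r_k$ with filtration $\mathcal G_j=\mathcal F_{s_k-j}$, exactly as in the proof of Lemma~\ref{lem:reverse-freedman}. In this time direction $(Y_{s_k-j},\mathcal G_j)$ is an ordinary martingale difference sequence. Moreover, each conditional covariance $\E(Y_nY_n^T\mid\mathcal F_{n+1})$ is predictable.

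\emph{Truncation.} Set $\widehat B_k=\sum_{n=r_k}^{s_k-1}Y_n\mathbf 1\{Q_{[n,s_k)}\preceq(\frac12+\delta_k)V_k^2I_2\}$, where $Q_{[n,s_k)}=\sum_{m=n}^{s_k-1}\E(Y_mY_m^T\mid\mathcal F_{m+1})$. Each indicator is $\mathcal F_{n+1}$-measurable, so the summands are still reverse martingale differences bounded by $c_{I_k}$. Their total conditional covariance $\widehat Q_k$ always satisfies $\widehat Q_k\preceq(\frac12+\delta_k)V_k^2I_2$, and the matrix $R_k:=(\frac12+\delta_k)V_k^2I_2-\widehat Q_k\succeq0$ is $\mathcal F_{r_k}$-measurable. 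If $Q_{I_k}\preceq(\frac12+\delta_k)V_k^2I_2$, no term is cut, so $\widehat B_k=B_k$. If moreover $\|Q_{I_k}/V_k^2-I_2/2\|_{\mathrm{op}}\le\delta_k$, then $R_k\preceq 2\delta_kV_k^2I_2$.

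\emph{Completion.} Let $K_k$ be a large integer (polynomial in $\Lambda_k$) and let $(\epsilon_{k,j,i})$ be independent Rademacher signs. Define
\[
E_k=\sum_{i=1}^2\sum_{j=1}^{K_k}\epsilon_{k,j,i}K_k^{-1/2}R_k^{1/2}e_i .
\]
Conditionally on $\mathcal F_{r_k}$, this is a sum of independent centered vectors with covariance $R_k$. We choose $K_k$ so that each piece has norm at most $d'_k:=\min\{c_{I_k},V_k\Lambda_k^{-3/2}\}$, or at most $V_k\Lambda_k^{-3/2}$ when $c_{I_k}=0$; this is possible because $\|R_k\|\le(\frac12+\delta_k)V_k^2$. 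Appending these pieces after time $r_k$ in the reversed order gives a martingale whose total predictable covariance is the deterministic matrix $(\frac12+\delta_k)V_k^2I_2$. All its increments are bounded by $d_k:=\max(c_{I_k},V_k\Lambda_k^{-3/2})$, and $d_k\sqrt{\Lambda_k}/V_k\to0$ by \eqref{eq:completed-block-small-increments}. This proves (1).

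For (3), apply Bernstein's inequality to each coordinate of $E_k$ conditionally on $\mathcal F_{r_k}$. The variance is at most $2\delta_kV_k^2$ and the increments are at most $V_k\Lambda_k^{-3/2}$. At level $\eta a_k/\sqrt2$ this gives
\[
\exp\Bigl(-\frac{\eta^2V_k^2\Lambda_k}{C(\delta_kV_k^2+\eta V_k^2/\Lambda_k)}\Bigr),
\]
and two coordinates with two signs give the factor $4$.

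For (2), use an exponential change of measure. Take $\theta_k=2w\sqrt{\Lambda_k}/V_k$, so that $|\theta_k|d_k\to0$ and Lemma~\ref{lem:conditional-tilt-estimates} applies to every increment $X_j$ of the completed martingale, with $\psi_j$ its conditional log-Laplace transform. The process
\[
\exp\Bigl(\langle\theta_k,\textstyle\sum_{i\le j}X_i\rangle-\sum_{i\le j}\psi_i(\theta_k)\Bigr)
\]
is a $\mathcal G$-martingale starting from $1$ given $\mathcal F_{s_k}$, so it defines a tilted measure $\mathbb Q_k$. By the first tilt estimate and the deterministic total covariance,
\[
\sum\psi_i(\theta_k)=\tfrac12\theta_k^T\bigl(\tfrac12+\delta_k\bigr)V_k^2\theta_k\,(1+O(|\theta_k|d_k))=(|w|^2+o(1))\Lambda_k ,
\]
with a deterministic $o(1)$. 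Under $\mathbb Q_k$ the increments have conditional means $\nabla\psi_i(\theta_k)$. By the second estimate these sum to $(\frac12+\delta_k)V_k^2\theta_k+O(d_k\Lambda_k)=wa_k+o(a_k)$. The centered $\mathbb Q_k$-martingale has total conditional variance at most $C\tr(\text{total covariance})=O(V_k^2)=o(a_k^2)$ by the third estimate. Chebyshev's inequality then gives $\mathbb Q_k(|\widetilde B_k-wa_k|\le\eta a_k\mid\mathcal F_{s_k})\ge\frac12$ for large $k$. Reversing the density on this event,
\[
\widetilde{\mathbb P}(\cdot\mid\mathcal F_{s_k})\ge\tfrac12\exp\bigl(-|\theta_k|(|w|+\eta)a_k+(|w|^2+o(1))\Lambda_k\bigr)=\exp\bigl(-(|w|^2+2\eta|w|+o(1))\Lambda_k\bigr).
\]

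The main obstacle is this tilting step. One must check carefully that the tilted mean and variance bounds hold uniformly, with only deterministic errors, even though $Q_{I_k}$ is random. This is exactly why the truncation and completion are made to produce a deterministic total covariance. One must also handle the conditioning on $\mathcal F_{s_k}$, which is the initial $\sigma$-field of the reversed filtration, correctly.
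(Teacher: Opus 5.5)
Your proposal is correct and follows the paper's proof essentially step for step. The shared steps are: reversing the block, a predictable truncation capping the conditional covariance at $(\frac12+\delta_k)V_k^2I_2$, and a completion by independent auxiliary increments so that the total predictable covariance is deterministic. The proofs then both use an exponential tilt controlled by Lemma~\ref{lem:conditional-tilt-estimates}, with Chebyshev's inequality under the tilted law and a change of measure back, and finish part (3) with Bernstein/Freedman.

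The differences are cosmetic:
\begin{itemize}
\item Your truncation is stopping-type, whereas the paper's may skip an index and include later ones.
\item You use Rademacher signs where the paper uses increments uniform on $\{\pm\sqrt2e_1,\pm\sqrt2e_2\}$.
\item You normalize $\theta_k$ by $\frac12$ rather than by $\frac12+\delta_k$.
\end{itemize}
One small inconsistency: with $d_k'=\min\{c_{I_k},V_k\Lambda_k^{-3/2}\}$, the number $K_k$ need not be polynomial in $\Lambda_k$. This is harmless, since no estimate depends on the number of auxiliary pieces.
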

\begin{proof}
Fix $k$, write $m=s_k-r_k$, and reverse the original block by setting
\[
 \mathcal G_j=\mathcal F_{s_k-j},
 \qquad \zeta_j=Y_{s_k-j},
 \qquad 1\le j\le m.
\]
Put
\[
 A_j=\E(\zeta_j\zeta_j^T\mid\mathcal G_{j-1}),
 \qquad
 T_k=\left(\frac12+\delta_k\right)V_k^2I_2.
\]
Define predictably, with $C_0=0$,
\[
 h_j=\mathbf1_{\{C_{j-1}+A_j\preceq T_k\}},
 \qquad
 \widehat\zeta_j=h_j\zeta_j,
 \qquad
 C_j=C_{j-1}+h_jA_j.
\]
Then $(\widehat\zeta_j,\mathcal G_j)$ is a martingale-difference sequence and
$C_m\preceq T_k$. Put $R_k=T_k-C_m$. On an independent auxiliary space let
$U_1,U_2,\ldots$ be independent, each uniformly distributed on
\[
 \{\sqrt2e_1,-\sqrt2e_1,\sqrt2e_2,-\sqrt2e_2\}.
\]
Thus $\E U_\ell=0$, $\E(U_\ell U_\ell^T)=I_2$, and $|U_\ell|=\sqrt2$.
Let $q_k=\lceil\Lambda_k^3\rceil$ and, after the original reversed block,
append
\[
 \xi_\ell=q_k^{-1/2}R_k^{1/2}U_\ell,
 \qquad 1\le\ell\le q_k.
\]
More explicitly, after time $m$ use the increasing filtration
\[
 \mathcal G_m\vee\sigma(U_1,\ldots,U_\ell),
 \qquad 0\le\ell\le q_k.
\]
The matrix $R_k$ is $\mathcal G_m$-measurable, and therefore the $\xi_\ell$
are martingale differences with conditional covariance $q_k^{-1}R_k$.
Their total predictable covariance is $R_k$, so the total predictable
covariance of the augmented block is exactly $T_k$. Moreover, every augmented
increment is bounded by
\begin{equation}\label{eq:augmented-increment-bound}
 b_k^{\mathrm{aug}}:=\max\left\{c_{I_k},\frac{CV_k}{\Lambda_k^{3/2}}\right\},
 \qquad
 \frac{b_k^{\mathrm{aug}}\sqrt{\Lambda_k}}{V_k}\to0.
\end{equation}
Let $\widehat B_k=\sum_j\widehat\zeta_j$ and
$E_k=\sum_\ell\xi_\ell$. If $Q_{I_k}\preceq T_k$, every raw partial
covariance sum is bounded by $Q_{I_k}$, so $h_j=1$ for every $j$ and
$\widehat B_k=B_k$. On the stronger covariance event in the statement,
$R_k=T_k-Q_{I_k}$ and $0\preceq R_k\preceq2\delta_kV_k^2I_2$. This proves
part~(1).

We prove part~(2). List all augmented increments as $X_{\nu,k}$, let
$\mathcal H_{\nu,k}$ be their increasing filtration, and put
\[
 A_{\nu,k}=\E(X_{\nu,k}X_{\nu,k}^T\mid\mathcal H_{\nu-1,k}).
\]
Then $\sum_\nu A_{\nu,k}=T_k$. Write
$t_k=1/2+\delta_k$ and set
\[
 \theta_k=T_k^{-1}(wa_k)=\frac{w\sqrt{\Lambda_k}}{t_kV_k}.
\]
By \eqref{eq:augmented-increment-bound},
$|\theta_k|b_k^{\mathrm{aug}}\to0$. Define
\[
 \psi_{\nu,k}(\theta)=
 \log\E(e^{\langle\theta,X_{\nu,k}\rangle}
       \mid\mathcal H_{\nu-1,k}),
 \qquad
 K_k(\theta)=\sum_\nu\psi_{\nu,k}(\theta).
\]
Lemma~\ref{lem:conditional-tilt-estimates} gives, uniformly in the underlying
sample point,
\begin{align}
 K_k(\theta_k)
 &=\frac12\theta_k^TT_k\theta_k+o(\Lambda_k)
 =\frac{|w|^2}{2t_k}\Lambda_k+o(\Lambda_k),
 \label{eq:completed-cumulant}\\
 m_k^{\theta}:=\sum_\nu\nabla\psi_{\nu,k}(\theta_k)
 &=T_k\theta_k+o(a_k)=wa_k+o(a_k),
 \label{eq:completed-drift}\\
 \sum_\nu\tr\nabla^2\psi_{\nu,k}(\theta_k)&\le CV_k^2.
 \label{eq:completed-variance}
\end{align}
Here, the error in \eqref{eq:completed-drift} is bounded by
$Cb_k^{\mathrm{aug}}\Lambda_k=o(a_k)$.

The density process
\[
 L_{j,k}=\exp\left\{
 \left\langle\theta_k,\sum_{\nu\le j}X_{\nu,k}\right\rangle
 -\sum_{\nu\le j}\psi_{\nu,k}(\theta_k)
 \right\}
\]
is a positive martingale with $L_{0,k}=1$. Thus, for a bounded $H$, define
\[
 \widetilde{\mathbb E}^{\theta_k}
 (H\mid\mathcal F_{s_k})
 =\widetilde{\mathbb E}(HL_{\mathrm{fin},k}
   \mid\mathcal F_{s_k}).
\]
Under this tilted conditional law,
$X_{\nu,k}-\nabla\psi_{\nu,k}(\theta_k)$ is a martingale difference. Hence,
orthogonality of its increments and \eqref{eq:completed-variance} gives
\[
 \widetilde{\mathbb E}^{\theta_k}\left(
 \left|\widetilde B_k-m_k^{\theta}\right|^2
 \mid\mathcal F_{s_k}\right)\le CV_k^2.
\]
Chebyshev's inequality, therefore, yields
\[
 \widetilde{\mathbb P}^{\theta_k}\left(
 |\widetilde B_k-m_k^{\theta}|>\frac{\eta a_k}{2}
 \mid\mathcal F_{s_k}\right)=O(\Lambda_k^{-1}).
\]
Together with \eqref{eq:completed-drift}, the tilted conditional probability
of $\{|\widetilde B_k-wa_k|\le\eta a_k\}$ tends to one. On this event,
\[
 \langle\theta_k,\widetilde B_k\rangle
 \le\langle\theta_k,wa_k\rangle+|\theta_k|\eta a_k.
\]
Since
\[
 L_{\mathrm{fin},k}^{-1}
 =\exp\{-\langle\theta_k,\widetilde B_k\rangle
          +K_k(\theta_k)\},
\]
changing measure back and using \eqref{eq:completed-cumulant} yields
\begin{align*}
 &\widetilde{\mathbb P}\left(
 |\widetilde B_k-wa_k|\le\eta a_k\mid\mathcal F_{s_k}\right)\\
 &\quad\ge
 \exp\left\{-\frac{|w|^2+2\eta|w|}{1+2\delta_k}\Lambda_k
              -o(\Lambda_k)\right\}(1-o(1)),
\end{align*}
which implies \eqref{eq:completed-block-lower}. The argument also covers
$w=0$, with $\theta_k=0$.

Finally, on the covariance event in part~(3), each auxiliary increment has
norm at most $CV_k/\Lambda_k^{3/2}$ and the total conditional covariance is
bounded by $2\delta_kV_k^2I_2$. Conditionally on $\mathcal F_{r_k}$, apply the ordinary forward form of
Freedman's inequality for both signs of each coordinate of $E_k$. A union
bound gives \eqref{eq:completion-tail}.
\end{proof}

The final ingredient in the proof of Theorem~\ref{thm:LILforReverseMartingale} is a Borel--Cantelli lemma. We need a version slightly different from~\cite{Free73}, which is stated for increasing filtration, but the argument can be readily adapted to obtain the next lemma. For concreteness, we supply a short argument. 
\begin{lemma}[A Borel--Cantelli lemma]
\label{lem:reverse-conditional-BC}
Let $(\mathcal H_k)$ be decreasing and let $A_k\in\mathcal H_{k-1}$. If
$p_k=\mathbb P(A_k\mid\mathcal H_k)$, then
\[
 \left\{\sum_kp_k=\infty\right\}\subseteq\{A_k\ \text{infinitely often}\}
 \quad\text{a.s.}
\]
\end{lemma}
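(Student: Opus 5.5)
The plan is to reduce the statement to a finite-window estimate obtained by reversing time. After the reversal the setting becomes that of an ordinary increasing filtration, where an exponential supermartingale argument (as in Freedman's proof of Lévy's conditional Borel--Cantelli lemma~\cite{Free73}) gives the bound.

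\emph{Step 1: reversal.} Fix $n<N$ and set $\mathcal G_j=\mathcal H_{N-j}$ for $0\le j\le N-n$. This is an increasing filtration. For $1\le j\le N-n+1$ put
\[
B_j=A_{N-j+1},\qquad q_j=p_{N-j+1}.
\]
Since $A_{N-j+1}\in\mathcal H_{N-j}=\mathcal G_j$, the events $B_j$ are adapted. Since $p_{N-j+1}$ is $\mathcal H_{N-j+1}=\mathcal G_{j-1}$-measurable, the $q_j$ are predictable, and $q_j=\mathbb P(B_j\mid\mathcal G_{j-1})$. Here $\mathcal G_{-1}:=\mathcal H_{N+1}$ handles the first index. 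This is exactly the forward setup.

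\emph{Step 2: exponential supermartingale.} Define
\[
W_j=\exp\Bigl(\sum_{i\le j}q_i\Bigr)\prod_{i\le j}\mathbf 1_{B_i^c}.
\]
Because $q_{j}$ is known at time $j-1$, we get
\[
\E(W_j\mid\mathcal G_{j-1})=W_{j-1}e^{q_j}(1-q_j)\le W_{j-1},
\]
using $1-q\le e^{-q}$. Hence $\E W_{N-n+1}\le 1$. Markov's inequality then gives, for every $t>0$,
\[
\mathbb P\Bigl(\bigcap_{k=n}^{N}A_k^c,\ \sum_{k=n}^{N}p_k\ge t\Bigr)\le e^{-t}.
\]

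\emph{Step 3: limits.} Let $N\to\infty$. The events $\bigcap_{k=n}^N A_k^c\cap\{\sum_{k=n}^N p_k> t\}$ increase to $\bigcap_{k\ge n}A_k^c\cap\{\sum_{k\ge n}p_k>t\}$, so the same bound $e^{-t}$ holds for the limit event. On $\{\sum_k p_k=\infty\}$ every tail sum is infinite, and therefore
\[
\mathbb P\Bigl(\{\textstyle\sum_kp_k=\infty\}\cap\bigcap_{k\ge n}A_k^c\Bigr)\le e^{-t}\quad\text{for all }t,
\]
so this probability is $0$. Taking the union over $n$ shows that almost surely on $\{\sum p_k=\infty\}$ only finitely many $A_k^c$-tails can hold, i.e.\ $A_k$ occurs infinitely often.

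The only delicate point is the index bookkeeping in Step 1. One must check that the hypothesis $A_k\in\mathcal H_{k-1}$ together with $p_k\in\mathcal H_k$ becomes, after reversal, exactly "event adapted, conditional probability predictable." This is precisely what makes the supermartingale step valid. Everything else is routine.
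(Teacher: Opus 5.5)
Your argument is essentially the paper's. After relabeling time, your $W_j$ is the supermartingale counterpart of the paper's product $R_{m,n}=\prod_{k=m}^n(1-\mathbf 1_{A_k})/(1-p_k)$, followed by the same Markov bound $e^{-t}$, passage $N\to\infty$, and union over the starting index; using $e^{q}(1-q)\le 1$ also spares you the paper's caveat about the set where $p_k=1$.

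One small slip is in Step 3. The events $\bigcap_{k=n}^N A_k^c\cap\{\sum_{k=n}^N p_k>t\}$ are not increasing in $N$, because the first factor shrinks. Instead, note that $\bigcap_{k\ge n}A_k^c\cap\{\sum_{k=n}^N p_k>t\}$ is contained in them and does increase to the limit event, or use Fatou's lemma as the paper does.
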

\begin{proof}
Fix $m\le n$ and ignore the null set on which $p_k=1$ but $A_k$ fails. On
$\bigcap_{k=m}^n\{p_k<1\}$ put
\[
 R_{m,n}=\prod_{k=m}^n\frac{1-\mathbf1_{A_k}}{1-p_k}.
\]
The product of the factors with index larger than $k$ is
$\mathcal H_k$-measurable, and
\[
 \E\left(\frac{1-\mathbf1_{A_k}}{1-p_k}\,\middle|\,\mathcal H_k\right)=1.
\]
Iterated conditioning, therefore, gives $\E R_{m,n}=1$. On the event that no
$A_k$ occurs for $m\le k\le n$,
\[
 R_{m,n}=\prod_{k=m}^n(1-p_k)^{-1}
 \ge \exp\left(\sum_{k=m}^np_k\right).
\]
Consequently, Markov's inequality gives, for $t>0$,
\[
 \mathbb P\left(A_k^c\text{ for }m\le k\le n,
                 \ \sum_{k=m}^np_k\ge t\right)\le e^{-t}.
\]
If no $A_k$ occurs after $m$ and $\sum_{k\ge m}p_k=\infty$, the event inside
the probability holds for all sufficiently large $n$. Fatou's lemma bounds its
probability by $e^{-t}$ for every $t>0$, hence by zero. Taking the union over
$m$ proves the result.
\end{proof}

\subsection{Proof of Theorem~\ref{thm:LILforReverseMartingale}}
Put
\[
 a_N=v_N\sqrt{L(L(v_N^2))},
 \qquad q_N=\tr Q_N.
\]
First observe that $v_{N+1}/v_N\to1$. Indeed, $q_N/v_N^2\to1$ a.s. and
\[
 0\le q_{N+1}-q_N
 =\E(|Y_N|^2\mid\mathcal F_{N+1})
 \le c_N^2=o(v_{N+1}^2).
\]
Thus $q_N/v_{N+1}^2\to1$, and comparison with $q_N/v_N^2\to1$ proves the
claim.

Fix a unit vector $u\in\mathbb R^2$ and set
$\xi_n=\langle u,Y_n\rangle$. Then
\[
 \sum_{n=0}^{N-1}\E(\xi_n^2\mid\mathcal F_{n+1})
 =u^TQ_Nu\sim\frac12v_N^2.
\]
Lemma~\ref{lem:real-reverse-lil-upper}, with $s_N=v_N/\sqrt2$, gives
\[
 \limsup_{N\to\infty}\frac{\langle u,M_N\rangle}{a_N}\le1
 \quad\text{a.s.}
\]
For each $j$, choose a finite $2^{-j}$-net $\mathcal D_j$ of the unit circle
and intersect the corresponding full-measure sets. If $M_N\ne0$, choose
$u_N\in\mathcal D_j$ with $|u_N-M_N/|M_N||\le2^{-j}$. Then
\[
 \left(1-2^{-2j-1}\right)\frac{|M_N|}{a_N}
 \le \max_{u\in\mathcal D_j}\frac{\langle u,M_N\rangle}{a_N}.
\]
Taking upper limits and then letting $j\to\infty$ gives
\begin{equation}\label{eq:upper-cluster-contained}
 \limsup_{N\to\infty}\frac{|M_N|}{a_N}\le1,
 \qquad
 \Cl\{M_N/a_N:N\ge1\}\subseteq\overline{\mathbb D}
 \quad\text{a.s.}
\end{equation}

For the reverse inclusion, fix $w\in\mathbb R^2$ with $0<|w|<1$ and choose
$1<\alpha<|w|^{-2}$. Let $N_k$ be the least integer such that
$v_{N_k}^2\ge e^{k^\alpha}$. Since $v_{N+1}/v_N\to1$,
\[
 v_{N_k}^2\sim e^{k^\alpha},
 \qquad \Lambda_k:=L(L(v_{N_k}^2))=\alpha\log k+o(\log k),
 \qquad \frac{v_{N_{k-1}}}{v_{N_k}}\to0.
\]
Put
\[
 I_k=[N_{k-1},N_k),\qquad V_k=v_{N_k},\qquad
 B_k=M_{N_k}-M_{N_{k-1}}.
\]
Since $Q_{I_k}=Q_{N_k}-Q_{N_{k-1}}$,
\[
 \frac{Q_{N_{k-1}}}{V_k^2}
 =\frac{Q_{N_{k-1}}}{v_{N_{k-1}}^2}
  \frac{v_{N_{k-1}}^2}{v_{N_k}^2}\longrightarrow0
 \quad\text{a.s.}
\]
Consequently,
\[
 \frac{Q_{I_k}}{V_k^2}\longrightarrow\frac{1}{2}I_2\quad\text{a.s.},
 \qquad
 \frac{c_{I_k}\sqrt{\Lambda_k}}{V_k}
 \le\frac{c_{N_k}^*\sqrt{L(L(v_{N_k}^2))}}{v_{N_k}}
 \longrightarrow0.
\]
Since $\frac{Q_{I_k}}{V_k^2}\to \frac{1}{2}I_2$ almost surely, we can find a deterministic $\delta_k\downarrow 0$ such that
\[
\mathbb{P}(\left\|Q_{I_k}/V_k^2-I_2/2\right\|_{\mathrm{op}}>\delta_k)\leq 2^{-k}.
\]
In particular, the Borel--Cantelli lemma gives us that
\begin{equation}\label{eq:block-envelope}
 \left\|Q_{I_k}/V_k^2-I_2/2\right\|_{\mathrm{op}}\le\delta_k
 \quad\text{eventually a.s.}
\end{equation}
Apply Lemma~\ref{lem:completed-block} to all blocks on one product extension,
using independent auxiliary families from block to block. Let
$\mathcal U_k$ be the sigma-field generated by the auxiliary variables in
block $k$, and define the decreasing filtration
\[
 \mathcal H_k=\mathcal F_{N_k}\vee\sigma(\mathcal U_j:j>k).
\]
For $\eta>0$, set
\[
 A_k(w,\eta)=\{|\widetilde B_k-wV_k\sqrt{\Lambda_k}|
                   \le\eta V_k\sqrt{\Lambda_k}\}.
\]
Then $A_k(w,\eta)\in\mathcal H_{k-1}$, and independence of the auxiliary
families from the original space and from one another give
\[
 \widetilde{\mathbb P}(A_k(w,\eta)\mid\mathcal H_k)
 =\widetilde{\mathbb P}(A_k(w,\eta)\mid\mathcal F_{N_k}).
\]
By \eqref{eq:completed-block-lower}, if $\eta>0$ is so small that
\[
 \beta:=\alpha(|w|^2+3\eta|w|)<1,
\]
then the last conditional probability is at least $k^{-\beta}$ eventually.
Lemma~\ref{lem:reverse-conditional-BC} therefore gives
\begin{equation}\label{eq:augmented-hits-io}
 A_k(w,\eta)\quad\text{infinitely often},
 \qquad \widetilde{\mathbb P}\text{-a.s.}
\end{equation}

On the full-measure event in \eqref{eq:block-envelope}, part~(1) of
Lemma~\ref{lem:completed-block} gives $\widehat B_k=B_k$ eventually. Moreover,
\eqref{eq:completion-tail} and $\Lambda_k\sim\alpha\log k$ imply, for every
fixed $\varepsilon>0$,
\[
 \sum_k\widetilde{\mathbb P}\left(
 |E_k|>\varepsilon V_k\sqrt{\Lambda_k}
 \mid\mathcal F_{N_{k-1}}\right)<\infty
 \quad\text{a.s.}
\]
Indeed, the coefficient of $\Lambda_k$ in the exponent of
\eqref{eq:completion-tail} tends to infinity. Fix an original sample point in
the full-measure event \eqref{eq:block-envelope}. For that sample point, the
probabilities, with respect to the auxiliary coordinates, are summable.
The Borel--Cantelli lemma on the auxiliary product space, followed by
Fubini's theorem, therefore, yields
\begin{equation}\label{eq:completion-negligible}
 \frac{|E_k|}{V_k\sqrt{\Lambda_k}}\longrightarrow0
 \qquad\widetilde{\mathbb P}\text{-a.s.}
\end{equation}
Combining \eqref{eq:augmented-hits-io} and
\eqref{eq:completion-negligible}, we obtain
\[
 |B_k-w a_{N_k}|\le(\eta+o(1))a_{N_k}
 \quad\text{for infinitely many }k.
\]
By \eqref{eq:upper-cluster-contained},
$|M_{N_{k-1}}|=O(a_{N_{k-1}})$, and
$a_{N_{k-1}}/a_{N_k}\to0$. Hence, along those indices,
\[
 \left|\frac{M_{N_k}}{a_{N_k}}-w\right|\le\eta+o(1).
\]
Let $\eta\downarrow0$ through a countable sequence. Intersecting over a countable dense subset of the punctured open disk shows, on the product extension, that the cluster set contains the closed unit disk. This completes the proof.
\qed

\section{LIL for inner functions}\label{Sec:LILForInnerFunction}
We now prove Theorem~\ref{thm:GeneralLIL}. We recall that $f$ is an inner function fixing the origin and which is not a rotation. We use $T=T_f$ to denote the boundary map of $f$ and put $\lambda = f'(0)$. Since $f$ is not a rotation, Schwarz's lemma gives $|\lambda|<1$. For notational convenience, we have
\[
 X_n=T^n,\qquad D_n=X_n-\overline\lambda X_{n+1},
 \qquad \mathcal F_n=\sigma(X_n).
\]
Recall from Lemma~\ref{cor:MomentsOfDn} that $D_n$ is a reverse martingale difference sequence with respect to the filtration $\mathcal{F}_n$.  Given a sequence $(b_n)$ of complex numbers, we now define an auxiliary sequence
\[
 \beta_{-1}=0,
 \qquad
 \beta_n=\sum_{j=0}^n\overline\lambda^{\,n-j}b_j,
 \qquad n\ge0.
\]
Since $b_n=\beta_n-\overline\lambda\beta_{n-1}$, summation by parts gives
\begin{equation}\label{eqn:SNvsMN}
 S_N^{(b)}=M_N+\overline\lambda\beta_{N-1}X_N,
 \qquad
 M_N=\sum_{n=0}^{N-1}\beta_nD_n.
\end{equation}
Note that $M_N$ is a partial sum of the reverse martingale difference sequence. Further notice that 
\[
\sigma_N^2(f) = \E[|M_N|^2]+ |\lambda|^2|\beta_{N-1}|^2 = \sum_{n=0}^{N-1}|\beta_N^2||D_n|^2+|\lambda|^2|\beta_{N-1}|^2\;.
\]
Put
\begin{equation*}
 A_N=\sum_{n=0}^{N-1}|\beta_n|^2,
 \qquad
 v_N^2=(1-|\lambda|^2)A_N,
 \qquad
 \Delta_N=\max_{0\le n<N}|\beta_n|.
\end{equation*}
Recall that $B_N=\sum_{n=0}^{N-1}|b_n|^2$ and $b_N^*=\max_{0\le n<N}|b_n|$. Set $r=|\lambda|<1$ and note that
 \begin{align*}
  \frac{B_N}{2(1+r^2)}\leq A_N&\le\frac{B_N}{(1-r)^2},\qquad  \frac{b_N^{*}}{1+r}\leq  \Delta_N\le\frac{b_N^*}{1-r}. 
 \end{align*}
 Therefore, the condition~\eqref{eqn:LILcondition} in Theorem~\ref{thm:GeneralLIL} is equivalent to
 \begin{equation}\label{eqn:LILCondition-re}
  A_N\to\infty,
  \qquad
  \frac{\Delta_N^2L(L(A_N))}{A_N}\to0.
 \end{equation}
Furthermore, under assumptions~\eqref{eqn:LILcondition} (equivalently~\eqref{eqn:LILCondition-re}) we also have that $\sigma_N^2(f)\sim v_N^2$. 
This implies that 
\[
\Cl\left\{
 \frac{S_N^{(b)}(z)}{\sqrt{\sigma_N^2(f)L(L(\sigma_N^2(f)))}}:N\ge1
 \right\}= \Cl\left\{
 \frac{S_N^{(b)}(z)}{\sqrt{v_N^2L(L(v_N^2))}}:N\ge1
 \right\}\;.
\]
Therefore, we can reformulate Theorem~\ref{thm:GeneralLIL} in the following equivalent form, which is more convenient for us. 

\begin{theorem}[LIL for inner functions]
\label{thm:GeneralLIL-martingale}
Let $f$ be an inner function on $\mathbb D$ with $f(0)=0$ that is not a
rotation. Let $(b_n)$ be complex coefficients such that $(\beta_n)$ satisfy~\eqref{eqn:LILCondition-re}. Then, for $m$-almost every $z\in\T$,
\[
 \Cl\left\{
 \frac{S_N^{(b)}(z)}{\sqrt{v_N^2L(L(v_N^2))}}:N\ge1
 \right\}=\overline{\mathbb D}.
\]
\end{theorem}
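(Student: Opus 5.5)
We reduce Theorem~\ref{thm:GeneralLIL-martingale} to Theorem~\ref{thm:LILforReverseMartingale} applied to the $\mathbb R^2$-valued reverse martingale differences $Y_n=\beta_nD_n$ (identifying $\C$ with $\mathbb R^2$), with $\mathcal F_n=\sigma(X_n)$ and the given $v_N$. By \eqref{eqn:SNvsMN}, $S_N^{(b)}=M_N+\overline\lambda\beta_{N-1}X_N$. Since $|X_N|=1$, the error term is at most $\Delta_N$. By \eqref{eqn:LILCondition-re}, $\Delta_N=o\bigl(\sqrt{A_N/L(L(A_N))}\bigr)$, which is certainly $o\bigl(v_N\sqrt{L(L(v_N^2))}\bigr)$. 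So the error term does not affect the cluster set, and it suffices to identify the cluster set of $M_N/(v_N\sqrt{L(L(v_N^2))})$.

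\textbf{Checking the hypotheses.} First, $|Y_n|\le|\beta_n|\,|D_n|\le(1+r)|\beta_n|=:c_n$, so $c_N^*\le 2\Delta_N$. Since $L(L(v_N^2))\sim L(L(A_N))$, the second condition in \eqref{eq:revLIL-cov-matrix} follows from \eqref{eqn:LILCondition-re}. Also $v_N$ is nondecreasing and tends to infinity.

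For the covariance condition, write the real $2\times2$ conditional covariance of $Y_n$ in terms of the complex quantities $\E(|Y_n|^2\mid\mathcal F_{n+1})$ and $\E(Y_n^2\mid\mathcal F_{n+1})$. For $Y=Y^1+iY^2$,
\[
\E(YY^T\mid\mathcal F_{n+1})=\tfrac12\E(|Y|^2\mid\mathcal F_{n+1})I_2+\tfrac12\begin{pmatrix}\Ree \E(Y^2\mid\mathcal F_{n+1}) & \operatorname{Im}\E(Y^2\mid\mathcal F_{n+1})\\ \operatorname{Im}\E(Y^2\mid\mathcal F_{n+1}) & -\Ree\E(Y^2\mid\mathcal F_{n+1})\end{pmatrix}.
\]
By \eqref{eqn:QuadraticVariation} the first part gives exactly $\tfrac12 v_N^2 I_2$. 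By \eqref{eqn:QuadraticCov}, it therefore suffices to show
\[
\frac{1}{A_N}\sum_{n=0}^{N-1}\beta_n^2X_{n+1}\to0\quad m\text{-a.s.}
\]
This is the expected role of the weighted SLLN (Lemma~\ref{lem:SLLNWeighted}), and I regard it as the main obstacle.

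\textbf{The weighted SLLN.} The weights satisfy $|\beta_n|^2\ge0$ with $\sum|\beta_n|^2=A_N\to\infty$ and $\max_{n<N}|\beta_n|^2=o(A_N)$. Iterating $X_{n+1}=D_{n+1}+\overline\lambda X_{n+2}$ would be circular, so I would decompose differently. Write $X_{n+1}=\sum_{j\ge0}\overline\lambda^{\,j}D_{n+1+j}$, which converges in $L^\infty$ because $|\lambda|<1$ and $\overline\lambda^{\,j}X_{n+1+j}\to0$. Then
\[
\sum_{n<N}\beta_n^2X_{n+1}=\sum_k\gamma_k^{(N)}D_k,\qquad |\gamma_k^{(N)}|\le C\max_{n<N}|\beta_n|^2\ \text{(for the relevant }k\text{)}.
\]
Up to a tail that is controlled uniformly, this is again a reverse martingale sum with bounded increments. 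The Freedman bound, Lemma~\ref{lem:reverse-freedman}, applied to the real and imaginary parts, gives
\[
\mathbb P\bigl(|\textstyle\sum_k\gamma_kD_k|>\varepsilon A_N\bigr)\le 4\exp\bigl(-c\varepsilon^2A_N^2/(\Delta_N^2A_N+\Delta_N^2\varepsilon A_N)\bigr)=4\exp\bigl(-c'\varepsilon^2A_N/\Delta_N^2\bigr).
\]
By \eqref{eqn:LILCondition-re}, $A_N/\Delta_N^2\ge K\,L(L(A_N))$ for every $K$ eventually, so along the geometric subsequence $A_{N_k}\approx\rho^k$ these probabilities are at most $k^{-K c'\varepsilon^2}$, which is summable. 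Borel--Cantelli then gives convergence along $N_k$. To interpolate between $N_k$ and $N_{k+1}$, I would use the maximal form of the same inequality on blocks, or alternatively note that block increments have size $O((\rho-1)A_{N_k})$ trivially since $|X_{n+1}|=1$. Letting $\rho\downarrow1$ completes the SLLN.

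\textbf{Conclusion.} Theorem~\ref{thm:LILforReverseMartingale} now yields that the cluster set of $M_N/(v_N\sqrt{L(L(v_N^2))})$ is the closed unit disc almost surely. Combined with the negligibility of $\overline\lambda\beta_{N-1}X_N$, this gives the claim.
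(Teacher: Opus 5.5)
Your proposal is correct and follows the paper's proof. You reduce via $S_N^{(b)}=M_N+\overline\lambda\beta_{N-1}X_N$ to Theorem~\ref{thm:LILforReverseMartingale} for $Y_n=\beta_nD_n$, check the covariance condition through \eqref{eqn:QuadraticVariation}--\eqref{eqn:QuadraticCov}, and dispose of $A_N^{-1}\sum_{n<N}\beta_n^2X_{n+1}$ with the weighted SLLN of Lemma~\ref{lem:SLLNWeighted}. Your expansion $X_{n+1}=\sum_j\overline\lambda^{\,j}D_{n+1+j}$ gives exactly the paper's summation-by-parts coefficients $\gamma_{k-1}$ plus the tail $\overline\lambda\gamma_{N-1}X_{N+1}$. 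Your trivial interpolation between $N_k$ and $N_{k+1}$, using $|X_{n+1}|=1$ and $A_{N_{k+1}}/A_{N_k}\to\rho$, is a valid and slightly simpler substitute for the paper's maximal backward-tail bound.
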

\begin{proof}
We begin by observing that $S_N^{(b)}$ and $M_N$ have the same cluster set at the LIL scale. That is, note that 
\[
\frac{|S_N^{(b)}-M_N|}
 {v_N\sqrt{L(L(v_N^2))}}= \frac{|\overline\lambda\beta_{N-1}X_N|}
 {v_N\sqrt{L(L(v_N^2))}}
 \le C_f\frac{\Delta_N}{\sqrt{A_NL(L(A_N))}}\longrightarrow 0.
\]
It remains to apply Theorem~\ref{thm:LILforReverseMartingale} to the complex
variables $Y_n=\beta_nD_n$, identified with vectors in $\mathbb R^2$. They satisfy
\[
 |Y_n|\le c_n:= (1+|\lambda|)|\beta_n|\;.
\]
The second condition in~\eqref{eq:revLIL-cov-matrix} follows directly from~\eqref{eqn:LILCondition-re}. Let 
\[
  Q_N=\sum_{n=0}^{N-1}\E(Y_nY_n^T\mid\mathcal F_{n+1}).
\]
It remains to check that $\frac{Q_n}{v_N^2}\to \frac{1}{2}I$ almost surely, as $N\to \infty$. To this end, we first note that it is equivalent to showing that 
\begin{align*}
    \frac{1}{v_N^2}\sum_{n=0}^{N-1}\E[|Y_n|^2\mid \mathcal{F}_{n+1}] \to 1,\qquad \frac{1}{v_N^2}\sum_{n=0}^{N-1}\E(Y_n^2\mid\mathcal F_{n+1}) \to 0, 
\end{align*}
almost surely, as $N\to \infty$. 
The first condition follows immediately from Corollary~\ref{cor:reversemartingale}, which in fact gives
\[
\frac{1}{v_N^2}\sum_{n=0}^{N-1}\E[|Y_n|^2\mid \mathcal{F}_{n+1}] =1, \text{almost surely}\;.
\]
On the other hand, Corollary~\ref{cor:reversemartingale} also gives
\[
 \frac{1}{v_N^2}\sum_{n=0}^{N-1}\E(Y_n^2\mid\mathcal F_{n+1}) = \frac{\overline{a_2}}{v_N^2}\sum_{n=0}^{N-1}\beta_n^2X_{n+1}\;,
\]
which goes to $0$ almost surely by Lemma~\ref{lem:SLLNWeighted} (with $a_n=\beta_n^2$
$W_N=A_N$, $a_N^*=\Delta_N^2$) that we prove below.

\end{proof}

We state and prove the last assertion as a separate Lemma because we believe it can be of independent interest. It can be seen as a strong law of large numbers (SLLN) for the weighted sum $\sum_{n=0}^{N-1}a_nX_{n+1}$ under Tsuchikura type assumption~\cite{Tsuchi51}. This is closely related to a result of Cohen and Lin~\cite{CohenLin09}. 

The almost sure convergence of weighted sums of random variables is an intensely investigated topic that began around 1950~\cite{Tsuchi51, MR13435, JOP, Heyde, MR221571}. Kolmogorov's three series theorem is a cornerstone result in the field pertaining to the case of weighted sums of independent random variables. In our case, the $X_n$ are not independent. However, we can write $\sum_{n=0}^{N-1}a_nX_n$ as the partial sums of reverse martingale differences, up to a bounded error term.  Our conditions are essentially Tsuchikura's condition~\cite{Tsuchi51}. We note that the strong law of large numbers for the weighted sum of the martingale difference sequences under Tsuchikura's condition was proved by Cohen and Lin~\cite[Corollary 4.12]{CohenLin09} where it is attributed to Azuma~\cite{MR221571}. Since we are working with the weighted sum of a reverse martingale difference sequence as opposed to a martingale difference sequence, we find it prudent to give a complete proof, which is an adaptation of Azuma's original proof.

\begin{lemma}[SLLN for weighted sums of reverse martingale differences]
\label{lem:SLLNWeighted}
Let $(a_n)$ be deterministic complex numbers, and put
\[
 W_N=\sum_{n=0}^{N-1}|a_n|,
 \qquad a_N^*=\max_{0\le n<N}|a_n|,
 \qquad T_N=\sum_{n=0}^{N-1}a_nX_{n+1}.
\]
Assume $W_N\to\infty$ and $ \frac{a_N^*L(L(W_N))}{W_N}\to0$. Then, $T_N/W_N\to0$ almost surely.
\end{lemma}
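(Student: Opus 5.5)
The plan is to reduce $T_N$ to a partial sum of reverse martingale differences plus a small boundary term, exactly as in \eqref{eqn:SNvsMN}, and then to apply the maximal Bernstein bound of Lemma~\ref{lem:reverse-freedman} along a geometric sequence of blocks. Put $r=|\lambda|<1$ and
\[
 \gamma_{-1}=0,\qquad \gamma_n=\sum_{j=0}^n\overline\lambda^{\,n-j}a_j,\qquad n\ge 0,
\]
so that $a_n=\gamma_n-\overline\lambda\gamma_{n-1}$. The same summation by parts as in \eqref{eqn:SNvsMN}, with $X_{n+1}$ in place of $X_n$, should give
\[
 T_N=\sum_{n=0}^{N-1}\gamma_nD_{n+1}+\overline\lambda\gamma_{N-1}X_{N+1}.
\]
Here $|\gamma_n|\le a_N^*/(1-r)$ for $n<N$. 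The boundary term is therefore $O(a_N^*)$, and $a_N^*=o(W_N)$ by hypothesis, so it is negligible after division by $W_N$.

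Next I would set $Z_n=\gamma_nD_{n+1}$. By Lemma~\ref{cor:MomentsOfDn}, $(Z_n,\mathcal F_{n+1})$ is a reverse martingale difference sequence. I would treat the real and imaginary parts separately, and both signs of each. Each part is bounded by $c:=(1+r)a_N^*/(1-r)$ for $n<N$. Its conditional variances sum to at most $(1-r^2)\sum_{n<N}|\gamma_n|^2$, and this is deterministic. Cauchy--Schwarz gives $|\gamma_n|^2\le \frac{a_N^*}{1-r}\sum_{j\le n}r^{n-j}|a_j|$, and summing over $n$ yields $\sum_{n<N}|\gamma_n|^2\le a_N^*W_N/(1-r)^2$. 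This variance bound, linear in $W_N$ with the factor $a_N^*$, is the key point. Because the bound is deterministic, the variance constraint in \eqref{eqn:BernsteinBound} holds surely.

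For the blocks, fix $\rho>1$ and let $N_k$ be the least integer with $W_{N_k}\ge\rho^k$. Since single increments satisfy $|a_n|\le a^*_{n+1}=o(W_{n+1})$, we get $W_{N_k}\sim\rho^k$ and $L(L(W_{N_k}))\sim\log k$. Write $s=N_{k+1}$. For $N_k\le N<s$ we have $\sum_{n<N}Z_n=S_{0,s}-S_{N,s}$, so it suffices to bound $\max_{0\le m\le s}|S_{m,s}|$ for each real component. Lemma~\ref{lem:reverse-freedman} with $x=\varepsilon W_s$, $\sigma^2=Ca_s^*W_s$ and $c=Ca_s^*$ gives a probability at most
\[
 \exp\Bigl(-\frac{c'\varepsilon^2W_s}{a_s^*}\Bigr)=\exp\bigl(-c'\varepsilon^2\,\omega_k\,L(L(W_s))\bigr),\qquad \omega_k:=\frac{W_s}{a_s^*L(L(W_s))}\to\infty .
\]
This is at most $k^{-2}$ eventually, hence summable. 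Borel--Cantelli then yields $\limsup_N|T_N|/W_N\le 4\varepsilon\rho$ almost surely, using $W_{N_{k+1}}/W_N\le\rho(1+o(1))$. Letting $\varepsilon\downarrow0$ along a countable sequence finishes the proof.

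The step I expect to need most care is the block comparison. One must check that $W_{N_{k+1}}/W_{N_k}\to\rho$ even though $W_N$ may grow irregularly; this uses only that single jumps are $o(W_N)$. One must also check that $a^*_{N_{k+1}}$ is the right constant uniformly over the whole block. Beyond that, the argument is routine: the hypothesis $a_N^*L(L(W_N))/W_N\to0$ is exactly what makes the Bernstein exponent beat $\log k$.
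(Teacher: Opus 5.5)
Your proposal is correct and follows essentially the same route as the paper. You use the same summation-by-parts identity $T_N=\sum_{n<N}\gamma_nD_{n+1}+\overline\lambda\gamma_{N-1}X_{N+1}$, the same deterministic bound $\sum_{n<N}|\gamma_n|^2\le a_N^*W_N/(1-r)^2$, and Freedman's maximal inequality along blocks $W_{N_k}\ge\rho^k$ with Borel--Cantelli; the differences are cosmetic (Cauchy--Schwarz instead of Young's inequality, and one maximal bound over $0\le m\le N_{k+1}$ instead of the paper's separate bounds for the full sum and the backward tails).
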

\begin{proof}
Define
\[
 \gamma_n=\sum_{j=0}^n\overline\lambda^{\,n-j}a_j,
 \qquad \gamma_{-1}=0.
\]
Then $a_n=\gamma_n-\overline\lambda\gamma_{n-1}$ and
\begin{equation}\label{eq:weighted-martingale-decomposition}
 T_N=\sum_{n=0}^{N-1}\gamma_nD_{n+1}
     +\overline\lambda\gamma_{N-1}X_{N+1}.
\end{equation}
Writing $r=|\lambda|$, Young's inequality gives
\begin{equation}
\label{eq:gamma-bounds}
 \Gamma_N:=\max_{n<N}|\gamma_n|\le\frac{a_N^*}{1-r},
 \qquad
 \sum_{n=0}^{N-1}|\gamma_n|^2
 \le\frac{a_N^*W_N}{(1-r)^2}.
\end{equation}
The first term in \eqref{eq:weighted-martingale-decomposition} is a complex
partial sum of reverse martingale differences. Applying
Lemma~\ref{lem:reverse-freedman} to its real and imaginary parts and both
signs, and using \eqref{eq:gamma-bounds}, yields a constant $C_f$ such that
for every $x>0$,
\begin{equation}\label{eq:weighted-freedman}
 \mathbb P\left(
 \left|\sum_{n=0}^{N-1}\gamma_nD_{n+1}\right|>x\right)
 \le4\exp\left(
 -\frac{x^2}{C_f(a_N^*W_N+a_N^*x)}
 \right).
\end{equation}
The same estimate holds for the maximum of the backward tails over any finite
block.

Fix $\rho>1$ and let $N_k$ be the least index with $W_{N_k}\ge\rho^k$.
By our assumption, we have $a_N^*/W_N\to 0$. Since
$W_{N_k-1}<\rho^k\le W_{N_k}$ and the last jump is at most $a_{N_k}^*$, we have $W_{N_k}\sim\rho^k$, and also
\[
 \frac{W_{N_k}}{a_{N_k}^*\log k}\longrightarrow\infty.
\]
Taking $x=\varepsilon\rho^k$ in \eqref{eq:weighted-freedman}, both at
$N_{k+1}$ and for the maximal backward tail on $[N_k,N_{k+1})$, gives
probabilities bounded by
\[
 4\exp\left(-c_\varepsilon
             \frac{W_{N_{k+1}}}{a_{N_{k+1}}^*}\right),
\]
which are summable. Hence, Borel--Cantelli gives,
for all sufficiently large $k$,
\[
 \left|\sum_{n=0}^{N_{k+1}-1}\gamma_nD_{n+1}\right|
 \le\varepsilon\rho^k
\]
and
\[
 \max_{N_k\le N<N_{k+1}}
 \left|\sum_{n=N}^{N_{k+1}-1}\gamma_nD_{n+1}\right|
 \le\varepsilon\rho^k.
\]
Since $W_N\ge\rho^k$ for $N_k\le N<N_{k+1}$, subtraction of the backward
tail and then letting $\varepsilon\downarrow0$ through a countable sequence
shows, uniformly on these blocks, that
\[
 \frac{1}{W_N}\sum_{n=0}^{N-1}\gamma_nD_{n+1}\longrightarrow0
 \qquad\text{a.s.}
\]
Finally,
$|\lambda\gamma_{N-1}|/W_N\le r a_N^*/((1-r)W_N)\to0$, so
\eqref{eq:weighted-martingale-decomposition} completes the proof.

\end{proof}

\subsection*{Funding} 
The first author was supported by the Slovenian Research Agency program P1-0222 and grant J1-50002. This work was performed
within the project COMPUTE, funded within the QuantERA II Programme, which has received funding from the European Union's Horizon 2020 research and innovation program under Grant Agreement No.~101017733.


\bibliographystyle{amsalpha}
\bibliography{references}

\end{document}